\NewDocumentCommand{\basis}{om}{%
	\IfNoValueTF{#1}
	{\| \Phi^{#2}\|^2}
	{\langle \Phi^{#1},\phi^{#2} \rangle}%
}
\NewDocumentCommand{\pce}{om}{%
	\IfNoValueTF{#1}
	{\textsf{#2}}
	{\textsf{#2}^{#1}}%
}
\DeclareMathOperator{\tr}{tr}
\newcommand{\mbb}[1]{\mathbb{#1}}
\newcommand{\mcl}[1]{\mathcal{#1}}
\newcommand{\splk}[2]{\mcl{L}^2(\Omega, \mathcal{F}_{#1}, \mu; \mathbb{R}^{#2})}
\newcommand{\splx}[1]{\mcl{L}^2(\Omega, \mathcal{F}, \mu; \mathbb{R}^{#1})}
\newcommand{\lsp}{\mcl{L}^2} 
\newcommand{\diff}{\mathop{}\!\mathrm{d}}
\newcommand{\mean}{\mbb{E}}
\newcommand{\ini}{_{\text{ini}}}
\newcommand{\tini}{\text{ini}}
\newcommand{\dimx}{{n_x}}
\newcommand{\dimu}{{n_u}}
\newcommand{\dimz}{{n_z}}
\newcommand{\I}{\mathbb{I}}
\newcommand{\N}{\mathbb{N}}
\newcommand{\R}{\mathbb{R}}
\newcommand{\Xini}{X_{\text{ini}}}
\newcommand{\ed}{\stackrel{\mcl{W}}{=}}
\newtheorem{lem}{Lemma}
\newtheorem{thm}{Theorem}
\newtheorem{defn}{Definition}
\newtheorem{assum}{Assumption}
\newtheorem{prop}{Proposition}
\newcommand{\sinf}{\diamond}
\journal{Journal of \LaTeX\ Templates}
\begin{document}
\begin{frontmatter}
\title{A Polynomial Chaos Approach to Stochastic LQ Optimal Control: Error Bounds and Infinite-Horizon Results}

\author[Hamburg]{Ruchuan Ou}\ead{ruchuan.ou@tu-dortmund.de}
\author[Bayreuth]{Jonas Schie{\ss}l}\ead{jonas.schiessl@uni-bayreuth.de}
\author[Bayreuth]{Michael Heinrich Baumann}\ead{michael.baumann@uni-bayreuth.de}
\author[Bayreuth]{Lars Gr{\"u}ne}\ead{lars.gruene@uni-bayreuth.de}

\author[Hamburg]{Timm Faulwasser\corref{mycorrespondingauthor}}
\cortext[mycorrespondingauthor]{Corresponding author}
\ead{timm.faulwasser@tu-dortmund.de}

\address[Hamburg]{Institute of Energy Systems, Energy Efficiency and Energy Economics, TU Dortmund University, 44227 Dortmund, Germany, and\\
	Institute of Control Systems, Hamburg University of Technology, 21079 Hamburg, Germany}
\address[Bayreuth]{Mathematical Institute, University of Bayreuth, 95440 Bayreuth, Germany}                                             

\begin{keyword}                            
Linear--quadratic regulator, stochastic optimal control, polynomial chaos, stochastic stationarity, non-Gaussian distributions
\end{keyword}

\begin{abstract}
The stochastic linear--quadratic regulator problem subject to Gaussian disturbances is well known and usually addressed via a moment-based reformulation. Here, we leverage polynomial chaos expansions, which model random variables via series expansions in a suitable $\lsp$ probability space, to tackle the non-Gaussian case. We present the optimal solutions for finite and infinite horizons and we analyze the infinite-horizon asymptotics. We show that the limit of the optimal state-input trajectory is the unique solution to a corresponding stochastic stationary optimization problem in the sense of probability measures. Moreover, we provide a constructive error analysis for finite-dimensional polynomial chaos approximations of the optimal solutions and of the optimal stationary pair in non-Gaussian settings. A numerical example illustrates our findings.
\end{abstract}
\end{frontmatter}

\section{INTRODUCTION}
The Linear--Quadratic Regulator (LQR) \citep{kalman60contributions,kalman60new} is one of the seminal results of optimal control. For stochastic Linear Time Invariant (LTI) systems, most works derive the optimal controller subject to Gaussian uncertainties via a moment-based reformulation \citep{astrom70introduction,anderson79optimal}. There is also a line of research which generalizes the results in different directions, e.g., \citet{lim99stochastic} extend to indefinite control weights, \citet{gattami09generalized} considers arbitrary disturbances with power constraints, \cite{singh17extended} solve the LQR problem for discrete-time LTI systems with perfect prior knowledge of the future and present disturbance samples. \citet{sun18stochastic}  consider stochastic infinite-horizon LQ Optimal Control Problems (OCP) subject to continuous-time LTI systems.

In the context of stochastic uncertainty, Polynomial Chaos Expansions (PCE) are based on series expansions of random variables and date back to \citet{wiener38homogeneous}. The core idea of PCE is that square integrable random variables can be modeled as $\mcl{L}^2$ functions in a Hilbert space and thus can be parameterized by deterministic coefficients in appropriately chosen polynomial bases. We refer to \citet{sullivan15introduction} for a general introduction and to \citet{kim13wiener} for an early overview on control design using PCE. It has been popularized for numerical implementation in stochastic optimal control by \citet{fagiano12nonlinear,paulson14fast}, while it also has prospects for theoretical analysis \citep{paulson15stability,ahbe20region,pan23stochastic, faulwasser23behavioral}. The early work by \citet{fisher09linear}, where probabilistic uncertainties in system matrices and not exogenous stochastic disturbances are considered, used PCE for stochastic LQR design. In a similar setting, PCE has also been used for robust control~\citep{templeton12probabilistic,wan23polynomial} and stochastic optimal control~\citep{kim12probabilistic}. Another work by \citet{levajkovic18solving} solved stochastic LQR problems subject to continuous-time LTI systems with Gaussian disturbances in the PCE framework.

The main goal of this paper is to obtain the solutions to discrete-time stochastic Linear Quadratic (LQ) optimal control problems and to the corresponding stationary optimization problems subject to non-Gaussian uncertainties in the PCE framework. As the equivalence between the deterministic and stochastic LQR problems with Gaussian disturbances is well-known, this work generalizes it for arbitrary uncertainties of finite expectation and variance. Our contributions are as follows:
(i) We show that a PCE-reformulated stochastic LQ OCP can be decomposed into many separable (i.e. decoupled) subproblems, each of which corresponds to one source of stochastic uncertainty in the system, i.e., uncertain initial state and process disturbances at each time step are treated separately.
(ii) This way, we deviate from the established moment-based reformulation and we present the optimal solutions to the considered OCPs for both finite and infinite horizons. In particular, we provide constructive error analysis for truncated PCEs and we analyze the convergence of infinite-horizon optimal solutions. (iii) We characterize the corresponding stationary optimization problem and we describe its unique solution in closed form. As the solution is of infinite dimension, we propose a finite-dimensional approximation with detailed error analysis. Especially, for an arbitrary desired error bound, the proposed scheme can determine the required dimension of the PCE approximation. Drawing upon an example, we demonstrate the procedure for numerical computation of the solution to a stationary optimization problem.

The remainder of the paper is structured as follows: Section~\ref{sec:Preliminaries} details settings and preliminaries of the considered stochastic LQ OCP. In Section~\ref{sec:FiniteLQ}, we present the stochastic LQR for finite horizon. Then we extend the results to the infinite-horizon case and analyze the asymptotics in Section~\ref{sec:InfiniteLQ}. Section~\ref{sec:Stationary} addresses the stochastic stationary optimization problem. Section~\ref{sec:Simulation} presents a numerical example. The paper ends with conclusions in Section~\ref{sec:Conclusion}.
\section{Preliminaries} \label{sec:Preliminaries}
We consider stochastic discrete-time LTI systems
\begin{equation} \label{eq:Sys}
	X_{k+1} = AX_k + BU_k + EW_k, \quad X_{0} = X\ini,
\end{equation}
with state $X_k \in \splk{k}{n_x}$ and process disturbance  $W_k \in \splx{n_w}$, where $\Omega$ is the set of realizations, $\mathcal{F}$ is a $\sigma$-algebra, and $\mu$ is the considered probability measure. Throughout the paper, the probability distributions of the disturbance $W_k$, $k\in \N$ and the initial condition $X\ini\in \splk{0}{n_x}$ are assumed to be known and $W_k$, $k\in \N$ are \textit{i.i.d.} random variables. For the sake of simplicity, the spaces $\splx{n_z}$ and $\splk{k}{n_z}$ are compactly written as $\lsp(\R^\dimz)$, respectively, as $\lsp_{k}(\R^\dimz)$.

In the filtered probability space $(\Omega, \mcl F, (\mcl F_k)_{k\in \N}, \mu)$, the $\sigma$-algebra contains all available historical information, i.e.,
$\mcl F_0 \subseteq \mcl F_1 \subseteq ...  \subseteq \mcl F$.
Let $(\mcl F_k)_{k\in \N}$ be the smallest filtration that the stochastic process $X$ is adapted to, i.e., $ \mcl F_k = \sigma(X_i,i\leq k)$, where $\sigma(X_i,i\leq k)$ denotes the $\sigma$-algebra generated by $X_i,i\leq k$. Then, the control at time step $k$ is modeled as a stochastic process which is adapted to the filtration $\mathcal{F}_k$, i.e. $U_k \in \lsp_k(\R^\dimu)$. This immediately imposes a causality constraint on $U_k$, i.e., $U_k$ depends only on $X_i$, $i\leq k$ up to time step $k$. Thus $U_k$ may only depend on past disturbances $W_i$, $i< k$. For more details on filtrations we refer to \citet{fristedt13modern}.

\subsection{Problem Statement}
To formulate the cost functional, first we recall the weighted norm of a vector-valued random variable $Z\in\lsp(\R^\dimz)$ as
\begin{equation*}
\|Z\|_Q \coloneqq \sqrt{\int_{\Omega} Z(\omega)^\top QZ(\omega) \diff \mu(\omega)}=\sqrt{\mean [Z^\top Q Z]}
\end{equation*}
for a symmetric and positive semidefinite matrix $Q\in\R^{n_z\times n_z}$. When $Q$ is the identity $I$, the above definition turns out to be the $\lsp$-norm of $Z$ and is denoted by the shorthand $\|Z\|\coloneqq \|Z\|_I$. The definition readily includes deterministic variables $z\in \R^{n_z}$ considering the distribution to be the Dirac distribution.

Given the initial condition $\Xini$ and the disturbance $W_k$, $k\in\N$, we consider the following stochastic LQ problem
	\begin{equation} \label{eq:StochOCP}
			\min_{ U_k \in \lsp_k(\R^\dimu),\atop k\in\I_{[0,N-1]}}~  \|X_N\|_{Q_N}^2 + \sum_{k=0}^{N-1}  \ell(X_k,U_k) \quad \text{s.t.}~ \eqref{eq:Sys},\quad~
	\end{equation}
where $\ell(X_k,U_k) \coloneqq \|X_k\|_Q^2+\|U_k\|_R^2$, $Q_N\succeq 0$, $Q\succeq 0$, and $R\succ 0$. $\I_{[0,N-1]}$ denotes the set of integers $\{0,1,...,N-1\}$, $N \in\N$. The cost functional evaluated along an input sequence $\{U_k\}_{k=0}^{N-1}$ is written as $J_N(\Xini,U)$, while the minimum $J_N(\Xini,U^\star)$ is obtained for the optimal input $\{U_k^\star\}_{k=0}^{N-1}$. It directly follows from Lemma~1.14 by \citet{kallenberg97foundations} that inputs $U_k$, $k\in\N$ adapted to the filtration $\mcl F_k$ are equivalent to state feedback polices. Throughout the paper, we assume that $(A,B)$ is stabilizable and that $(A,Q^{1/2})$ is detectable.
 
\subsection{Polynomial Chaos Expansion}
PCE is a well-established framework for propagating uncertainties through dynamics. It was first introduced by \citet{wiener38homogeneous} to model stochastic processes using Hermite polynomials with Gaussian random variables. PCE was further generalized to other orthogonal polynomials for any $\lsp$ stochastic processes by~\citet{xiu02wiener}, while \citet{ernst12convergence} analyzed the convergence properties of the generalized PCEs. For a concise overview on PCE and its use in systems and control, we refer to \citet{kim13wiener}.

The core idea of PCE is that any $\lsp$ random variable can be described in a suitable polynomial basis. Consider an orthogonal polynomial basis $\{\phi^j(\xi)\}_{j=0}^{\infty}$ that spans the space $\mcl{L}^2(\Xi, \mathcal{F}, \mu; \mathbb{R})$, where the random variable $\xi\in \lsp(\R^{n_\xi})$ is called the stochastic germ of polynomials $\phi^j$, and $\Xi$ is the sample space of~$\xi$. Then it satisfies the following orthogonality relation
\begin{equation} \label{eq:Orthogonality}
	\langle \phi^i(\xi),\phi^j(\xi) \rangle {=} \int_{\Xi} \phi^i(\xi) \phi^j(\xi) \diff \mu(\xi) {=} \delta^{ij}\| \phi^j(\xi) \|^2,
\end{equation}
where $\delta^{ij}$ is the Kronecker delta and $\| \phi^j(\xi) \|^2= \langle \phi^j(\xi),\phi^j(\xi) \rangle$ by definition. The first polynomial is always chosen to be $\phi^0(\xi) = 1$. Hence, the orthogonality~\eqref{eq:Orthogonality} gives that for all other basis dimensions $j>0$, we have $\mean[\phi^j(\xi)]=\int_{\Xi} \phi^j(\xi) \diff \mu(\xi)=0$.

\begin{defn}[Polynomial chaos expansion]
The PCE of a real-valued random variable $Z \in  \lsp(\R)$ with respect to the basis $\{\phi^j(\xi)\}_{j=0}^{\infty}$ is 
\[
	Z(\omega) = \sum_{j=0}^{\infty}\pce{z}^j \phi^j(\xi(\omega)) \quad \text{with} \quad \pce{z}^j = \frac{\big\langle Z(\omega), \phi^j(\xi(\omega)) \big\rangle}{\| \phi^j(\xi) \|^2},
\]
where $\pce{z}^j\in \R$ is referred to as the $j$-th PCE coefficient.
\end{defn}
Compared to many other spectral representations of random variables and random processes, e.g. Karhunen–Lo{\` e}ve expansion consisting of coefficients in random variables and real-valued functions, we get \emph{deterministic} PCE coefficients $ \pce{z}^j$ and thus can treat the random variable $Z$ deterministically in the PCE framework \citep{ghanem91stochastic}. The stochastic germ $\xi: \Omega\to\Xi$ is the random variable argument of the polynomial basis. That is, $\xi(\omega)$ is viewed as a function of the outcome~$\omega$. This way, we construct the mapping between the random variable $Z$ and the stochastic germ $\xi$ in the PCE representation. The PCE of a vector-valued random variable $Z\in\lsp(\R^\dimz)$ follows by applying PCE component-wise, i.e., the $j$-th PCE coefficient of $Z$ reads
$\pce{z}^{j} \coloneqq \begin{bmatrix} \pce{z}^{1,j} & \pce{z}^{2,j} & \cdots & \pce{z}^{n_z,j} \end{bmatrix}^\top$, where $\pce{z}^{i,j}$ is the $j$-th PCE coefficient of $i$-th component $Z^i$.

By replacing all random variables in \eqref{eq:Sys} with their PCEs and using one joint basis $\{\phi^j(\xi)\}_{j=0}^\infty$, we obtain
\[
	\textstyle{\sum_{j=0}^{\infty}} \pce{x}_{k+1}^j\phi^j(\xi) = \textstyle{\sum_{j=0}^{\infty}} \Big(A\pce{x}_k^j + B\pce{u}_k^j+ E\pce{w}_k^j \Big)\phi^j(\xi).
\]
Projecting the above equation onto $\phi^j(\xi)$, the orthogonality relation~\eqref{eq:Orthogonality} indicates that for all $j \in \N^\infty$, given $\pce{x}^j\ini$ and $\pce{w}_k^j$, $k \in \N$, the PCE coefficients satisfy
\begin{equation} \label{eq:SysPCE}
	\pce{x}_{k+1}^j = A\pce{x}_k^j +B\pce{u}_k^j+ E\pce{w}_k^j,\quad \pce[j]{x}_{0} = \pce[j]{x}\ini
\end{equation}
for all $j \in \N^\infty$ with $ \N^\infty\coloneqq \N\cup\{\infty\}$. This procedure is known as Galerkin projection and we refer to \citet{pan23stochastic}, Appendix A for details and further references.

The truncation error $\Delta Z(L) = Z - \sum_{j=0}^{L-1}\pce{z}^j \phi^j(\xi)$, where the argument~$L\in\N^\infty$ is the  PCE dimension, satisfies $\lim_{L\to\infty}\|\Delta Z(L)||=0$ \citep{cameron47orthogonal, ernst12convergence}. Moreover, \citet{xiu02wiener} show that in appropriately chosen polynomial bases, the convergence rate to the limit is exponential in the $\lsp$ sense.
\begin{defn}[Exact PCE representation] \label{def:ExactPCE}
	We say a random variable $Z \in \lsp(\R^\dimz)$ admits an exact PCE of finite dimension $L\in\N$ if $ Z - \sum_{j=0}^{L-1}\pce[j]{z} \phi^j(\xi)=0$.
\end{defn}
Moreover, consider the PCEs $Z=\sum_{j=0}^{L-1}\pce{z}^j\phi^j(\xi)$ and $\tilde{Z}=\sum_{j=0}^{L-1}\tilde{\pce{z}}^j\phi^j(\xi)$ in the same basis $\{\phi^j(\xi)\}_{j=0}^{L-1}$, the expectation $\mean[Z]$ and the covariance $\Sigma[Z,\tilde{Z}]$ can be calculated as \citep{lefebvre20moment}
\begin{equation} \label{eq:MomentPCE}
	\mean [Z] = \pce{z}^0,\quad \Sigma[Z,\tilde{Z}] = \displaystyle{\sum_{j=1}^{L-1}} \pce{z}^j\tilde{\pce{z}}^{j\top} \| \phi^j(\xi) \|^2.
\end{equation}
We denote $\Sigma[Z,Z]$ by the shorthand $\Sigma[Z]$.

\subsection{Problem Reformulation in PCE}
\begin{assum}[Exact PCEs for $\Xini$ and  $W_k$]\label{ass:ExactIniW}
	The initial condition $\Xini$ and all i.i.d. disturbances $W_{k}$, $k\in \I_{[0,N-1]}$ in OCP~\eqref{eq:StochOCP} admit exact PCEs, cf. Definition ~\ref{def:ExactPCE}, with $L\ini$ terms and $L_w$ terms, respectively. Precisely, $\Xini = \textstyle{\sum_{i=0}^{L\ini-1}} \pce{x}\ini^i \varphi^i(\xi\ini)$ and $W_k = \textstyle{\sum_{n=0}^{L_w-1}} \pce{w}_k^n \psi^n(\xi_k)$ for $k\in \I_{[0,N-1]}$,
where $\xi_k$ are i.i.d. stochastic germs. Note that $ \varphi^0(\xi\ini){=}\psi^0(\xi_k){=}1$, and $L\ini$, $L_w \in\N$.
\end{assum}
In the above assumption, each $\xi_k$, $k\in\I_{[0,N-1]}$ corresponds to the disturbance $W_k$ at time step $k$. Thus, $\{\xi_k\}_{k=0}^{N-1}$ is a stochastic process. To distinguish the sources of uncertainties acting on the system, we use $\varphi$ and $\psi$ to refer to the PCE basis for the initial condition $\Xini$ and, respectively, to the bases for the disturbances $W_k$, $\I_{[0,N-1]}$. In other words, the distributions of random variables are expressed by the algebraic structure of the basis functions and the corresponding germ $\xi$. The correlation between random variables is determined by the interplay of the coefficients, cf.~\eqref{eq:MomentPCE}, and stochastic  independence can be modelled by the use of different germs. To convey context in our notation, we employ the index variables $i$ and $n$ in the PCEs of $\Xini$ and of $W_k$, $k\in \I_{[0,N-1]}$, respectively. We define the bases
$\Phi^{\tini} \coloneqq \{\varphi^i(\xi\ini)\}_{i=0}^{L\ini-1}$ and $\Psi^{w_k}\coloneqq \{\psi(\xi_k)\}_{n=0}^{L_w-1}$.
That is, $\Phi^{\text{ini}}$ is the basis for $\Xini$ and $\Psi^{w_k}$ is the one for $W_k$ at time step $k$. Then we construct the joint basis 
\begin{equation} \label{eq:FiniteBasis}
	\Phi \coloneqq \Phi^{\text{ini}}\cup \Psi^w \quad \text{with}\quad \Psi^w \coloneqq \cup_{k=0}^{N-1}\Psi^{w_k},
\end{equation}
where $\Psi^w$ collects all bases $\Psi^{w_k}$, $k\in\I_{[0,N-1]}$ over the entire horizon. Therefore, $\Phi$ reads
\begin{multline} \label{eq:BasisElement}
\Phi = \Big\{ 1, \underbrace{ \varphi^1(\xi\ini),...,\varphi^{L\ini-1}(\xi\ini)}_{\Phi^{\tini}\setminus \{\varphi^0(\xi\ini)\}},\underbrace{\psi^1(\xi_0),...,\psi^{L_w-1}(\xi_0)}_{\Psi^{w_0}\setminus \{\psi^0(\xi_0)\}},
\\ ...,\underbrace{\psi^1(\xi_{N-1}),...,\psi^{L_w-1}(\xi_{N-1})}_{\Psi^{w_{N-1}}\setminus \{\psi^0(\xi_{N-1})\}}\Big\}.
\end{multline}
It contains a total of $L = L\ini+N(L_w-1)$ terms, i.e., it grows linearly with the horizon $N$. The following result is case (ii) of Proposition~1 by \citet{pan23stochastic}.

\begin{prop}[Exact uncertainty propagation] \label{pro:Basis}
Consider OCP~\eqref{eq:StochOCP} with horizon $N\in\N$ and let Assumption~\ref{ass:ExactIniW} hold. Suppose an optimal solution $\{U_k^\star\}_{k=0}^{N-1}$ to OCP~\eqref{eq:StochOCP} exists. Then $\{X_k^\star\}_{k=0}^N$ and $\{U_k^\star\}_{k=0}^{N-1}$ admit exact PCEs in the basis $\Phi$ from \eqref{eq:FiniteBasis}.
\end{prop}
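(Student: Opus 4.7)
The plan is to combine an induction on the time step with an orthogonal projection argument that exploits the linearity of the dynamics~\eqref{eq:Sys} and the quadratic structure of the cost in~\eqref{eq:StochOCP}. First I would dispose of the easy propagation step: whenever $X_k$ and $U_k$ both admit exact PCEs in $\Phi$, so does $X_{k+1}=AX_k+BU_k+EW_k$, because $A,B,E$ are deterministic, $W_k$ has exact PCE in $\Psi^{w_k}\subseteq\Phi$ by Assumption~\ref{ass:ExactIniW}, and the coefficient-wise recursion~\eqref{eq:SysPCE} preserves the finite-dimensional structure carried by $\Phi$. Combined with the base case $X_0^\star=X\ini\in\operatorname{span}\Phi^{\tini}\subseteq\operatorname{span}\Phi$, it remains to establish that every optimal input $U_k^\star$ admits an exact PCE in $\Phi$.

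For this core step I would use an orthogonal $\lsp$-projection argument. Let $\Pi$ denote the orthogonal $\lsp$-projection onto $\operatorname{span}\Phi$, extended component-wise to vector-valued random variables, and for any admissible sequence $\{U_k\}_{k=0}^{N-1}$ set $\tilde U_k \coloneqq \Pi(U_k)$. Because $U_k$ is $\mcl F_k$-measurable, it is independent of the germs $\xi_j$ for $j\geq k$; hence its PCE coefficients on $\psi^n(\xi_j)$, $n\geq 1$, vanish by orthogonality, so $\tilde U_k$ picks up no basis components attached to future disturbances and remains admissible. By linearity of the dynamics and the fact that $W_k$ is already fixed by $\Pi$, induction yields $\tilde X_k=\Pi(X_k)$. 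Orthogonality of the PCE basis then delivers the Pythagorean identities
\begin{equation*}
\|X_k\|_Q^2 = \|\tilde X_k\|_Q^2 + \|X_k-\tilde X_k\|_Q^2, \qquad \|U_k\|_R^2 = \|\tilde U_k\|_R^2 + \|U_k-\tilde U_k\|_R^2,
\end{equation*}
from which $J_N(\Xini,\tilde U)\leq J_N(\Xini,U)$. Since $R\succ 0$ renders the cost strictly convex in $\{U_k\}$, the LQ optimum is unique, forcing $U_k^\star=\Pi(U_k^\star)$, i.e., $U_k^\star$ has exact PCE in $\Phi$. Propagation through~\eqref{eq:SysPCE} then extends the conclusion to all $X_k^\star$ up to $k=N$.

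The step I expect to be the main obstacle is the clean justification of $\tilde X_k=\Pi(X_k)$ together with the admissibility of $\tilde U_k$; both rest delicately on the specific basis construction~\eqref{eq:BasisElement} and on the independence of the germs $\xi\ini,\xi_0,\dots,\xi_{N-1}$, without which forward-in-time basis elements could leak into past-measurable random variables and break causality. An alternative, slicker route would invoke standard LQ dynamic programming to obtain a linear state feedback $U_k^\star=K_kX_k^\star$ with deterministic $K_k$ and read off the PCE structure trivially; however, the projection argument has the advantage of not presupposing the feedback form and of exhibiting directly why exactly the basis $\Phi$, and no strictly smaller one, is sufficient.
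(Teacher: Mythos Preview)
The paper does not supply its own proof of this proposition; it attributes the result to case~(ii) of Proposition~1 in \citet{pan23stochastic}. So there is no in-paper argument to compare against directly.

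Your projection argument is sound in outline. The Pythagorean step works because $\Pi$ is a component-wise orthogonal projection and $Q,Q_N,R$ are deterministic, so every cross term $\mean[\tilde X_k^\top Q(X_k-\tilde X_k)]$ vanishes; and $\tilde X_k=\Pi X_k$ follows by induction since $\Pi$ is linear, commutes with left-multiplication by $A,B,E$, and fixes $W_k\in\operatorname{span}\Phi$. The one point that genuinely needs more care---and you rightly flag it---is admissibility of $\tilde U_k$. You establish that $\tilde U_k$ carries no mass on basis functions attached to germs $\xi_j$, $j\geq k$, hence is $\sigma(\xi_{\text{ini}},\xi_0,\dots,\xi_{k-1})$-measurable. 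But the paper's filtration is $\mcl F_k=\sigma(X_i,i\leq k)$, which can be a strict sub-$\sigma$-algebra of the germ filtration (e.g., when $E$ has nontrivial kernel, or when $\xi_{\text{ini}}\mapsto X_{\text{ini}}$ fails to be injective); ``no future germs'' does not by itself deliver $\mcl F_k$-measurability. A clean way to close this is to run the projection argument over the larger germ filtration, identify the optimum there, and then observe a~posteriori that it is affine in $X_k^\star$ with deterministic gains and hence $\mcl F_k$-adapted after all---which is exactly the dynamic-programming route you sketch as an alternative. In the paper's logical flow, the affine feedback form is established only later (Lemma~\ref{lem:SolutionPCEj}, Theorem~\ref{theorem:SolutionFinite}), with Proposition~\ref{pro:Basis} taken as an input from the cited reference; your second route would instead derive both at once.
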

For the sake of clarity, we make the following simplification. We discuss its generalization as well as the relaxation of Assumption~\ref{ass:ExactIniW} in Section~\ref{sec:Extensions}.
\begin{assum} \label{ass:Simplification}
The process disturbance $W_k$, $k\in\N$ is a scalar random variable, i.e. $n_w=1$. Furthermore, Assumption~\ref{ass:ExactIniW} is satisfied with $L\ini=L_w=2$.
\end{assum}
Now we enumerate the simplified joint basis $\Phi=\{\phi^j\}_{j=-2}^{N-1}$ as
\begin{equation}\label{eq:BasisSim}
	\begin{split}
		\phi^j = \begin{cases}
			1, &\text{for } j=-2\\
			\varphi^1(\xi\ini), &\text{for } j=-1\\
			\phi^1(\xi_j),&\text{for } j\in\I_{[0,N-1]}
		\end{cases},
	\end{split}
\end{equation}
which contains $L=N+2$ terms. Henceforth, we drop the stochastic germs $\xi$ of the basis function $\phi^j$, $j\in\I_{[-2,N-1]}$ in our notation. Indeed, the specific germ directly follows from the index $j$. The index of $\Phi$ starts with $-2$ such that the terms $\phi^j$, $j\in\I_{[0,N-1]}$, correspond to the PCE bases $\psi^1(\xi_j)$ of the disturbance $W_j$, respectively. In other words, the index $j\in\I_{[0,N-1]}$ directly corresponds to the time step at which the disturbance $W_j$ enters the problem. As we will see later, this particular indexing is helpful in revealing crucial structure of the PCE reformulation.

Moreover, we remark that in the union of the individual bases in~\eqref{eq:FiniteBasis}, only one constant basis function for the expected value is kept and this basis function is indexed with $j=-2$.
The orthogonality of the basis~$\Phi$ holds as $\Xini$ and $W_k$, $k\in\I_{[0,N-1]}$ are all independent, i.e., $\langle \phi^i,\phi^j\rangle = \mean[\phi^i]\mean[\phi^j]=0$, for all $ i,j \in \I_{[-2,N-1]},\,i\neq j$.

With Assumption~\ref{ass:Simplification}, we replace the random variables in the stage cost of OCP~\eqref{eq:StochOCP} with their PCEs and get
\begin{align*}
	&\ell(X_k,U_k)=\Big(\sum\nolimits_{j=-2}^{N-1}\pce{x}_k^{j\top}\phi^j\Big)Q\Big(\sum\nolimits_{j=-2}^{N-1}\pce{x}_k^{j}\phi^j\Big)+\\
	&\hspace{70pt}\Big(\sum\nolimits_{j=-2}^{N-1}\pce{u}_k^{j\top}\phi^j\Big)R\Big(\sum\nolimits_{j=-2}^{N-1}\pce{u}_k^{j}\phi^j\Big)\\
	=&\sum\nolimits_{j=-2}^{N-1}\big(\pce{x}_k^{j\top}Q\pce{x}_k^j + \pce{u}_k^{j\top}R\pce{u}_k^j\big)\|\phi^j\|^2
\end{align*} 
for all $k\in\I_{[0,N-1]}$ from the orthogonality~\eqref{eq:Orthogonality}.
Together with the dynamics of the PCE coefficients~\eqref{eq:SysPCE}, we arrive at the exact reformulation of \eqref{eq:StochOCP}
\begin{equation} \label{eq:StochPCE}
	\begin{split}
			\min_{\substack{\pce{u}_k^j \in\R^{n_u},\\k\in\I_{[0,N-1]},\\j\in\I_{[-2,N-1]}}}~ &  \sum_{j=-2}^{N-1} \Big(\|\pce{x}_N^j\|_{Q_N}^2 + \sum_{k=0}^{N-1}  \ell(\pce{x}_k^j,\pce{u}_k^j) \Big) \| \phi^j\|^2\\
		\text{s.t.}\quad& \eqref{eq:SysPCE},\quad j\in \I_{[-2,N-1]},
	\end{split}
\end{equation}
where  $\ell(\pce{x}_k^j,\pce{u}_k^j)= \|\pce{x}_k^j\|_Q^2+\|\pce{u}_k^j\|_R^2$. It is easy to see that OCP~\eqref{eq:StochPCE} entails $L=N+2$ decoupled optimization problems. Hence its solution is obtained by solving
\begin{equation} \label{eq:StochPCEj}
		\min_{\pce{u}_k^j \in\R^{n_u},  k\in\I_{[0,N-1]}}~ \|\pce{x}_N^j\|_{Q_N}^2 + \sum_{k=0}^{N-1} \ell(\pce{x}_k^j,\pce{u}_k^j)\quad \text{s.t.}~ \eqref{eq:SysPCE},
\end{equation}
separately for all $j \in \I_{[-2,N-1]}$. The key observation here is that each source of uncertainty in system~\eqref{eq:Sys}, i.e., the uncertain initial condition $\Xini$ and the disturbances $W_k$ at each time step $k\in\I_{[0,N-1]}$ can be decoupled and thus considered separately. The minimum of OCP~\eqref{eq:StochPCEj} for all $j\in\I_{[-2,N-1]}$ for the optimal input $\{\pce{u}_k^{j,\star}\}_{k=0}^{N-1}$ is written as $J_N(\pce{x}\ini^j,\pce{u}^{j,\star})$. From the optimal trajectory of the decoupled OCPs~\eqref{eq:StochPCEj}, one can compute the optimal trajectory of OCP~\eqref{eq:StochOCP} in random variables as $Z_k^\star=\sum_{j=-2}^{k-1} \pce{z}_k^{j,\star}\phi^j$, $k\in\I_{[0,N-1]}$ and $(Z,\pce{z})\in\{(X,\pce{x}),(U,\pce{u})\}$.

\subsection{Recap---The LQR for Affine Systems}
To finish the setup, we recall the deterministic LQR  for affine systems\citep{anderson89optimal}. Readers familiar with this material may jump directly to Section~\ref{sec:FiniteLQ}. Consider
\begin{equation}\label{eq:DeterOCP}
	\begin{split}
			\min_{ u_k\in\R^{n_u},k\in\I_{[0,N-1]} }~ &  \|x_N\|_{Q_N}^2 + \sum_{k=0}^{N-1} \ell(x_k,u_k)\\
		\text{s.t.} \quad x_{k+1} = &Ax_k + Bu_k + Ec, \quad x_0=x\ini,
	\end{split}
\end{equation}
where $c \in\R^{n_c}$ is a known constant and thus the dynamics are affine. Same to the stochastic counterpart, we denote the cost functional evaluated along $\{u_k^\star\}_{k=0}^{N-1}$ by $J_N(x_\tini,u^\star)$.
OCP~\eqref{eq:DeterOCP} can be written as
\begin{equation} \label{eq:OCPAugmented}
	\begin{split}
			\min_{ u_k\in\R^{n_u},k\in\I_{[0,N-1]} }~  & \|z_N\|_{Q_N^\prime}^2 + \sum_{k=0}^{N-1} \|z_k\|_{Q^\prime}^2 + \|u_k\|_R^2\\
		\text{s.t.}\quad z_{k+1} &= A^\prime z_k + B^\prime u_k,\quad	z_0=z_\tini 
	\end{split}
\end{equation}
with $z_{k} \coloneqq \big[\begin{smallmatrix*}[l] x_{k}\\ c \end{smallmatrix*}\big]$, $A^\prime \coloneqq \big[\begin{smallmatrix*}[l] A & E \\ 0_{n_c\times n_x} & 1_{n_c\times n_c} \end{smallmatrix*}\big]$, $B^\prime \coloneqq \big[\begin{smallmatrix*}[l] B \\ 0_{n_c\times n_u}\end{smallmatrix*}\big]$, $Q^\prime \coloneqq \text{blkdiag}(Q,0_{n_c\times n_c})$, $Q_N^\prime \coloneqq \text{blkdiag}(Q_N,0_{n_c\times n_c})$.
The optimal solution reads
$ u_k^{\star} = K_{N-k}^\prime z_k^\star$ with $ K_{k}^\prime = -(R+B^{\prime \top} P_{k-1}^\prime B^\prime)^{-1}B^{\prime \top} P_{k-1}^\prime A^\prime$. The matrix $P_k^\prime$ is computed by $P_0^\prime=Q_N^\prime$, and the Riccati difference equation $P_{k+1}^\prime = Q^\prime+A^{\prime \top}\Big(P_{k}^\prime - P_{k}^\prime B^\prime (R+B^{\prime \top} P_{k}^\prime B^\prime)^{-1} B^{\prime \top} P_{k}^\prime \Big)A^\prime$. Consider
$P_k^\prime \coloneqq \big[\begin{smallmatrix*}[l]
P_k & G_k \\ G_k^\top & S_k
\end{smallmatrix*}\big]$, $k\in\I_{[0,N]}$ with $P_k\in\R^{n_x \times n_x}$, $G_k\in\R^{n_x\times n_c}$, and $S_k\in\R^{n_c\times n_c}$.
Then we have
\begin{subequations} \label{eq:SolutionDeterLQR}
	\begin{equation}
		u_k^{\star} = K_{N-k} x_k^\star + F_{N-k} c,
	\end{equation}
	\parbox{\linewidth}{where $x_k^\star$ is the optimal state, $K_{k} = -M_{k-1}^{-1}B^\top P_{k-1}A$, $F_{k} = {-}M_{k-1}^{-1}B^\top \big( P_{k-1}E {+} G_{k-1} \big)$, and $M_{k-1} = R+B^\top P_{k-1} B$. $P_k$ and $G_k$ are recursively computed by $P_0 =Q_N$, $G_0=0$, and}
	\begin{align}
		P_{k} &= Q{+}A^\top \Big(P_{k-1}{-} P_{k-1} B M_{k-1}^{-1} B^\top P_{k-1} \Big)A, \label{eq:RiccatiP}\\
		G_{k} &= (A+BK_{k})^\top (P_{k-1}E+G_{k-1}). \label{eq:RiccatiG}
	\end{align}
\end{subequations}
The feedback~\eqref{eq:SolutionDeterLQR} is a simplified case of Theorem~1 by \citet{singh17extended}, where $c$ is not constant. The minimum cost is
\begin{multline} \label{eq:SolutionDeterCost}
	J_N(x\ini,u^\star) =  \|x_N^\star\|_{Q_N}^2 + \textstyle{\sum_{k=0}^{N-1}} \ell(x_k^\star, u_k^\star) \\
	= z_0^\top P_N^\prime z_0 =  x\ini^{\top}P_N x\ini + 2c^\top G_N^\top x\ini + c^\top S_N c,
\end{multline}
where $S_N$ is computed by $S_0=0$ and $	S_{k} = S_{k-1} + E^\top G_{k-1} + G_{k-1}^\top E	+ E^\top P_{k-1}E - F_{k}^\top M_{k-1}F_{k}$.

Now we turn towards OCP~\eqref{eq:DeterOCP} with infinite horizon $N=\infty$ and $Q_N=0$. We use the superscript $\cdot^\sinf$ to highlight the infinite-horizon optimal solution to OCP~\eqref{eq:DeterOCP}. We also note that the stage cost $\ell(x,u)$ might be non-zero for affine systems, which in turn leads to an unbounded objective in the infinite-horizon OCP. Standard notions of optimality cannot be applied in this case. Hence, we recall the concept of overtaking optimality from \citet{carlson91infinite} for deterministic and stochastic OCPs.
\begin{defn}[Overtaking optimality] \label{def:OvertakingDeter}
Consider OCP~\eqref{eq:DeterOCP} with $N=\infty$ and $Q_N=0$. The control sequence $\{u_k^\sinf\}_{k=0}^\infty$ is deterministically overtakingly optimal if, for any other $\{u_k\}_{k=0}^\infty$, we have
\[
\liminf_{N\to\infty} J_N(x\ini, u) - J_N(x\ini,u^\sinf) \geq 0.
\]
Additionally, for the stochastic OCP~\eqref{eq:StochOCP} with $N=\infty$ and $Q_N=0$, the control sequence $\{U_k^\sinf\}_{k=0}^\infty$ is stochastically overtakingly optimal if, for any other $\{U_k\}_{k=0}^\infty$, it holds that 
\[
	\liminf_{N\to\infty} J_N(X\ini, U) - J_N(X\ini,U^\sinf) \geq 0.
\]
\end{defn}
Extending~\eqref{eq:SolutionDeterLQR} to the infinite-horizon case, the overtakingly optimal feedback for OCP~\eqref{eq:DeterOCP} with $N=\infty$ and $Q_N=0$ is given by
\begin{equation} \label{eq:SolutionDeterLQRInf}
		u_k^{\sinf} = K x_k^\sinf + F c,
\end{equation}
where $K$ and $F$ are stationary solutions to~\eqref{eq:SolutionDeterLQR}.
\section{Stochastic LQR on Finite Horizon} \label{sec:FiniteLQ}
\subsection{Solution in PCE Coefficients}\label{subsec:SolutionPCE}
First we rewrite the PCE of $\Xini$ and $W_k$, $k\in \I_{[0,N-1]}$ in the joint basis $\Phi$ from \eqref{eq:FiniteBasis}. Let Assumption~\ref{ass:Simplification} hold, and let the PCEs of $\Xini$ and $W_k$, $k\in \I_{[0,N-1]}$ in basis $\Phi$  be $\Xini = \sum_{j=-2}^{N-1} \pce{x}\ini^j \phi^j$ and $W_k = \sum_{j=-2}^{N-1} \pce[j]{w}_k\phi^j$, respectively. Then we have
\begin{subequations} \label{eq:PCECoe}
	\begin{align}
		\pce{x}\ini^j &= 0,\hspace{28pt}\forall j\in \I_{[0,N-1]},\label{eq:PCEXini}\\
		\pce{w}_k^{-1} &= \pce{w}_k^{j} =0,~\forall j,k \in \I_{[0,N-1]},~j\neq k, \label{eq:PCEWkj}\\
		\pce{w}_0^{-2} &= \pce{w}_1^{-2} = ... = \pce{w}_{N-1}^{-2} \coloneqq \mean[W], \label{eq:PCEW0}\\
		\pce{w}_0^0 &= \pce{w}_1^1 = ... = \pce{w}_{N-1}^{N-1}\coloneqq \pce{w}^0. \label{eq:PCEWw}
	\end{align}
\end{subequations}
Note that \eqref{eq:PCEXini}-\eqref{eq:PCEWkj} follow from the independence of the random variables $X\ini$ and $W_k$ and from the considered basis indexing. Equations \eqref{eq:PCEW0}-\eqref{eq:PCEWw} are due to $W_k$, $k\in\I_{[0,N-1]}$ being identically distributed.

\begin{lem}[Optimal solution via PCE] \label{lem:SolutionPCEj}
Consider OCP~\eqref{eq:StochPCEj} for all $j\in\I_{[-2,N-1]}$ and let Assumption~\ref{ass:Simplification} hold. For $k\in\I_{[0,N-1]}$, the optimal input is
\begin{subequations}
	\begin{equation} \label{eq:SolutionPCE}
		\pce{u}_k^{j,\star} = \begin{cases}  K_{N-k}\pce{x}_k^{j,\star} + F_{N-k} \mean[W], &\text{for } j=-2\\
			K_{N-k}\pce{x}_k^{j,\star}, &\text{otherwise}
		\end{cases}
	\end{equation}
	\parbox{\linewidth}{with $K_{N-k}$ and $ F_{N-k}$ from \eqref{eq:SolutionDeterLQR}. It yields the minimum cost $J_N(\pce{x}\ini^j,\pce{u}^{j,\star})=$}
	\begin{equation} \label{eq:SolutionCost}
			\begin{cases}  \|\mean[X\ini]\|_{P_N}^2 + \|\mean[W]\|_{S_N}^2 \\
				\hspace{40pt}+ 2\mean[W]^\top G_N^\top\mean[X\ini], &\text{for } j=-2\\
				\tr(P_N\Sigma[X\ini])/\|\phi^{-1}\|^2, &\text{for } j=-1\\
				\tr(P_{N-j-1}E\Sigma[W]E^\top)/\| \phi^j\|^2, &\text{otherwise}
			\end{cases}.
	\end{equation}
\end{subequations}
\end{lem}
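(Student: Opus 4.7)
The plan is to exploit the coefficient structure~\eqref{eq:PCECoe} in order to reduce each decoupled subproblem~\eqref{eq:StochPCEj} to an instance of the deterministic affine LQR~\eqref{eq:DeterOCP}, and then to invoke the recap formulas~\eqref{eq:SolutionDeterLQR}--\eqref{eq:SolutionDeterCost}. I proceed by case analysis on $j$.

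For $j=-2$, relation~\eqref{eq:PCEW0} gives $\pce{w}_k^{-2}=\mean[W]$ for every $k$, so~\eqref{eq:SysPCE} becomes the affine system $\pce{x}_{k+1}^{-2}=A\pce{x}_k^{-2}+B\pce{u}_k^{-2}+E\mean[W]$ with $\pce{x}_0^{-2}=\mean[X\ini]$. This matches~\eqref{eq:DeterOCP} with $c=\mean[W]$ and $x\ini=\mean[X\ini]$; applying~\eqref{eq:SolutionDeterLQR}--\eqref{eq:SolutionDeterCost} verbatim delivers the claimed feedback and minimum. For $j=-1$, we have $\pce{w}_k^{-1}=0$ and $\pce{x}_0^{-1}=\pce{x}\ini^{-1}$, so the subproblem is a standard LQR with optimum $\pce{u}_k^{-1,\star}=K_{N-k}\pce{x}_k^{-1,\star}$ and value $\pce{x}\ini^{-1,\top}P_N\pce{x}\ini^{-1}$. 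Under Assumption~\ref{ass:Simplification} only the single term $\phi^{-1}$ contributes to $\Sigma[X\ini]$ in~\eqref{eq:MomentPCE}, so $\Sigma[X\ini]=\pce{x}\ini^{-1}\pce{x}\ini^{-1,\top}\|\phi^{-1}\|^2$ and the value rewrites as $\tr(P_N\Sigma[X\ini])/\|\phi^{-1}\|^2$.

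For $j\in\I_{[0,N-1]}$, relations~\eqref{eq:PCEXini}--\eqref{eq:PCEWw} give $\pce{x}_0^j=0$, $\pce{w}_k^j=0$ for $k\neq j$, and $\pce{w}_j^j=\pce{w}^0$. The causality constraint inherited from the $\mcl{F}_k$-adaptedness of $U_k$ in~\eqref{eq:StochOCP} forces $\pce{u}_k^j=0$ for $k\leq j$, since the basis element $\phi^j$ depends on the not-yet-realized disturbance~$W_j$. Combined with $\pce{x}_0^j=0$, this yields $\pce{x}_k^{j,\star}=0$ for $k\leq j$ and $\pce{x}_{j+1}^{j,\star}=E\pce{w}^0$ from the dynamics. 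From $k=j+1$ onward the subproblem is a standard LQR with initial state $E\pce{w}^0$ over horizon $N-j-1$, giving $\pce{u}_k^{j,\star}=K_{N-k}\pce{x}_k^{j,\star}$ and minimum $(E\pce{w}^0)^\top P_{N-j-1}E\pce{w}^0$. Invoking~\eqref{eq:MomentPCE} once more with $\Sigma[W]=\pce{w}^0(\pce{w}^0)^\top\|\phi^j\|^2$ and cyclicity of the trace rewrites this as $\tr(P_{N-j-1}E\Sigma[W]E^\top)/\|\phi^j\|^2$.

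\noindent\textbf{Main obstacle.} The delicate step is the causality argument in the third case. Treated purely as a deterministic quadratic program, the subproblem~\eqref{eq:StochPCEj} with a single impulse $\pce{w}_j^j=\pce{w}^0$ admits pre-compensating controls $\pce{u}_k^j\neq 0$ at $k\leq j$ that strictly lower the objective, so its unconstrained optimum is \emph{not} of the form $K_{N-k}\pce{x}_k^{j,\star}$. The stated formula therefore relies on restricting admissible PCE coefficients to those that reproduce $\mcl{F}_k$-adapted inputs, i.e., $\pce{u}_k^j=0$ whenever $\phi^j$ involves a disturbance not yet realized by time $k$. Making this restriction explicit, and thereby reconciling the decoupled PCE subproblem with the adaptedness requirement of the original OCP, is the load-bearing step of the argument.
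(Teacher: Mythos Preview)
Your proposal is correct and follows essentially the same three-case structure as the paper's own proof: reducing $j=-2$ to the affine LQR with constant drift $\mean[W]$, $j=-1$ to the standard LQR, and $j\in\I_{[0,N-1]}$ to a shifted standard LQR starting at $E\pce{w}^0$ after invoking the causality restriction $\pce{u}_k^j=0$ for $k\le j$. Your identification of the causality step as the load-bearing point is exactly right and matches the paper's treatment in its Step~III.
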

\begin{proof}
The proof proceeds in three steps.  Step I)---Propagation of the expected value. Consider OCP \eqref{eq:StochPCEj} for $j=-2$ given by
\begin{equation} \label{eq:StochPCEExp}
	\begin{split}
		 &\min_{\pce{u}_k^{-2}\in\R^{n_u},~k\in\I_{[0,N-1]}}~   \|\pce{x}_N^{-2}\|_{Q_N}^2+ \displaystyle{\sum_{k=0}^{N-1}} \ell(\pce{x}_k^{-2},\pce{u}_k^{-2}) \\
		&\text{s.t.}\quad \pce{x}_{k+1}^{-2} = A\pce{x}_k^{-2} +B\pce{u}_k^j+ E\pce{w}_k^{-2},\quad \pce{x}_{0}^{-2} = \pce{x}^{-2}\ini.
	\end{split}
\end{equation}
As $\pce{w}_k^{-2} =  \mean[W]$, $k\in\I_{[0,N-1]}$ is constant over time, \eqref{eq:SolutionDeterLQR} suggests the optimal feedback
$\pce{u}_k^{{-2},\star} = K_{N-k}\pce{x}_k^{{-2},\star} + F_{N-k} \mean[W]$. Then \eqref{eq:SolutionCost} for $j=-2$ follows from \eqref{eq:SolutionDeterCost}.

Step II)---Propagation of the non-mean part of the initial condition. Since $\pce{w}_k^{-1} =0$, $\forall k \in \I_{[0,N-1]}$ holds, OCP \eqref{eq:StochPCEj} for $j=-1$ is simplified as
\begin{equation} \label{eq:StochPCEIni}
	\begin{split}
		\min_{\pce{u}_k^{-1},~k\in\I_{[0,N-1]}}~  &\|\pce{x}_N^{-1}\|_{Q_N}^2+\displaystyle{\sum_{k=0}^{N-1}} \ell(\pce{x}_k^{-1},\pce{u}_k^{-1}) \\
		\text{s.t.}\quad \pce{x}_{k+1}^{-1} &= A\pce{x}_k^{-1} + B\pce{u}_k^{-1},\quad \pce{x}_0^{-1}=\pce{x}\ini^{-1}.
	\end{split}
\end{equation}
We observe that OCPs~\eqref{eq:StochPCEj}, $\forall j\in\I_{[-2,N-1]}$ share the same weighting matrices $Q_N$, $Q$, $R$ and the same system matrices $A$, $B$. Therefore, we obtain
$\pce{u}_k^{-1,\star} = K_{N-k}\pce{x}_k^{-1,\star}$ and
$J_N(\pce{x}\ini^{-1},\pce{u}^{-1,\star})=\pce{x}\ini^{-1\top}P_N \pce{x}\ini^{-1}$.

Step III)---Propagation of the non-mean part of the disturbances. Consider the dynamics of the PCE coefficients for all $j\in\I_{[0,N-1]}$. The causality requirement stemming from the consideration of the adapted filtration for $U_k \in \lsp_{k}(\R^\dimu)$ implies that $U_k$ only depends on $X_i$, $i\leq k$. Due to our chosen indexing and the causality, we obtain $\pce{w}_k^j=0$ and $\pce{u}_k^j = 0$ for $k\leq j$, which implies $\pce{x}_k^j = 0$, $k\leq j$. We observe that $\pce{x}_{j+1}^{j} = A\cdot 0 + B\cdot 0 + E\pce{w}_j^j=E\pce{w}^0$ and $\pce{w}_k^{j}=0$, $k\geq j+1$ as \eqref{eq:PCEWkj} and \eqref{eq:PCEWw} hold. Therefore, an equivalent reformulation of OCP~\eqref{eq:StochPCEj}, $j\in\I_{[0,N-1]}$ is
\begin{equation} \label{eq:StochPCEW}
	\begin{split}
		\min_{\pce{u}_k^{j},~k\in\I_{[0,N-1]}}~ &\|\pce{x}_N^{j}\|_{Q_N}^2+\displaystyle{\sum_{k=j+1}^{N-1}} \ell(\pce{x}_k^{j},\pce{u}_k^{j})\\
		\text{s.t.} \quad 		\pce{x}_{k+1}^{j} &= A\pce{x}_k^{j} + B\pce{u}_k^{j},~k\geq j+1, \\ 
		\pce{x}_{j+1}^{j} &= E\pce{w}^0, \quad \pce{x}_{k}^{j} = 0,~k\leq j.
	\end{split}
\end{equation}
For $k\geq j+1$, the optimal feedback for~\eqref{eq:StochPCEW} is $\pce{u}_k^{j,\star} = K_{N-k}\pce{x}_k^{j,\star}$. We can extend it to the case $k\in\I_{[0,N-1]}$ since $\pce{u}_k^{j,\star}=\pce{x}_k^{j,\star}=0$ for $k\in\I_{[0,j]}$. The minimum for $j\in\I_{[0,N-1]}$ is $J_N(\pce{x}\ini^j,\pce{u}^{j,\star})= \pce{w}^{0\top}E^\top P_{N-j-1}E\pce{w}^0=\tr(P_{N-j-1}E\Sigma[W]E^\top)/\| \phi^j\|^2$.
\end{proof}

\subsection{Optimal state trajectories in PCE} \label{subsec:Illustration}
Applying the feedback~\eqref{eq:SolutionPCE} to \eqref{eq:SysPCE}, we obtain the optimal state trajectories of PCE coefficients.
\begin{prop}[PCE coefficient trajectories] \label{prop:PCETraj}
Consider OCP~\eqref{eq:StochPCEj} for all $ j\in\I_{[-2,N-1]}$. The optimal state trajectories of PCE coefficients are
\begin{equation}   \label{eq:PCEXTraj}
	\pce{x}_k^{j,\star} {=} \begin{cases}
		\bar{A}_0^{k-1}\pce{x}\ini^{-2}{+}\sum_{i=0}^{k-1}\bar{A}_{i+1}^{k-1}\tilde{F}_{N-i}\mean[W],  &\text{for } j=-2\\
		\bar{A}_0^{k-1}\pce{x}\ini^{-1},  &\text{for } j=-1\\
			0,\hfill \text{for } k \leq j,\hspace*{16pt}&j\in\I_{[0,N-1]}\\
			\bar{A}_{j+1}^{k-1} E\pce{w}^0,\hfill\text{for } k\geq j+1,&j\in\I_{[0,N-1]}
	\end{cases}
\end{equation}
with $\tilde{F}_{N-i} \coloneqq BF_{N-i}+E$. Note that $\pce{w}^0=\pce{w}_j^j$ holds for $j\in\I_{[0,N-1]}$. For all $k_1,k_2\in\I_{[0,N-1]}$, let the matrix $\bar{A}_{k_1}^{k_2}$ be
\[
\bar{A}_{k_1}^{k_2} \coloneqq \begin{cases} \prod_{k=k_1}^{k_2} (A+BK_{N-k}),&\text{for } 0 \leq k_1 \leq k_2 \\
	I,&\text{otherwise} \end{cases}.
\]
Then for the PCE coefficients related to the disturbances, i.e. for all $k\in\I_{[1,N]}$ and $j\in\I_{[0,k-1]}$, we have
\begin{subequations}\label{eq:Observation}
	\begin{itemize}
		\item for fixed PCE coefficient dimension $j$
		\begin{equation}
			\pce{x}_{k+t}^{j,\star} = \bar{A}_{k}^{k+t-1}\pce{x}_k^{j,\star}, \quad t\in\I_{[0,N-k]};
		\end{equation}
		\item for fixed time step $k$ over PCE coefficient dimension
		\begin{equation} \label{eq:Observation2}
			\pce{x}_k^{j-t,\star} = \bar{A}_{j-t+1}^{j}\pce{x}_k^{j,\star},\quad t \in\I_{[0,j]}.
		\end{equation}
	\end{itemize}
\end{subequations}
\end{prop}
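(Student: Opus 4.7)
The plan is to simply close the loop: insert the optimal feedback~\eqref{eq:SolutionPCE} from Lemma~\ref{lem:SolutionPCEj} into the PCE dynamics~\eqref{eq:SysPCE} for each $j\in\I_{[-2,N-1]}$ separately, and then unroll the resulting linear time-varying recursion in closed form. Throughout I will use the abbreviation $A_k\coloneqq A+BK_{N-k}$, so that by definition $\bar{A}_{k_1}^{k_2}=A_{k_2}A_{k_2-1}\cdots A_{k_1}$ whenever $0\le k_1\le k_2$ and the identity otherwise; this is just a compact name for the state transition matrix of the closed-loop LTV system, and the crucial property I will use is the semigroup relation $\bar{A}_{a}^{c}=\bar{A}_{b+1}^{c}\,\bar{A}_{a}^{b}$ for $a\le b<c$.

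First, for $j=-2$, substituting $\pce{u}_k^{-2,\star}=K_{N-k}\pce{x}_k^{-2,\star}+F_{N-k}\mean[W]$ into~\eqref{eq:SysPCE} together with $\pce{w}_k^{-2}=\mean[W]$ (cf.~\eqref{eq:PCEW0}) gives
\[
\pce{x}_{k+1}^{-2,\star}=A_k\,\pce{x}_k^{-2,\star}+(BF_{N-k}+E)\mean[W]=A_k\,\pce{x}_k^{-2,\star}+\tilde{F}_{N-k}\mean[W],
\]
with $\pce{x}_0^{-2,\star}=\pce{x}\ini^{-2}$. Unrolling by induction on $k$ yields exactly the first line of~\eqref{eq:PCEXTraj}. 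For $j=-1$, the feedback is $\pce{u}_k^{-1,\star}=K_{N-k}\pce{x}_k^{-1,\star}$ and $\pce{w}_k^{-1}=0$ by~\eqref{eq:PCEWkj}, so $\pce{x}_{k+1}^{-1,\star}=A_k\,\pce{x}_k^{-1,\star}$; this is a homogeneous LTV recursion whose solution is $\bar{A}_0^{k-1}\pce{x}\ini^{-1}$.

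For $j\in\I_{[0,N-1]}$, I combine the coefficient identities~\eqref{eq:PCEXini}--\eqref{eq:PCEWw} with causality. Equations~\eqref{eq:PCEXini} and~\eqref{eq:PCEWkj} give $\pce{x}_0^{j}=0$ and $\pce{w}_k^{j}=0$ for $k\neq j$, while the causality of $U_k$ (already exploited in Lemma~\ref{lem:SolutionPCEj}) forces $\pce{u}_k^{j,\star}=0$ for $k\le j$; iterating~\eqref{eq:SysPCE} then propagates zeros and gives $\pce{x}_k^{j,\star}=0$ for $k\le j$. At step $k=j+1$ only the disturbance term survives, so $\pce{x}_{j+1}^{j,\star}=E\pce{w}_j^{j}=E\pce{w}^0$ by~\eqref{eq:PCEWw}. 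For $k\ge j+1$ the recursion is again homogeneous, $\pce{x}_{k+1}^{j,\star}=A_k\,\pce{x}_k^{j,\star}$, whose solution starting from $E\pce{w}^0$ at time $j+1$ is $\bar{A}_{j+1}^{k-1}E\pce{w}^0$, matching the last line of~\eqref{eq:PCEXTraj}.

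The two observations in~\eqref{eq:Observation} follow immediately from the closed form for $j\in\I_{[0,N-1]}$ and the semigroup property of $\bar{A}$: for fixed $j$, $\pce{x}_{k+t}^{j,\star}=\bar{A}_{j+1}^{k+t-1}E\pce{w}^0=\bar{A}_k^{k+t-1}\bar{A}_{j+1}^{k-1}E\pce{w}^0=\bar{A}_k^{k+t-1}\pce{x}_k^{j,\star}$, and for fixed $k$ with $j-t\ge 0$, $\pce{x}_k^{j-t,\star}=\bar{A}_{j-t+1}^{k-1}E\pce{w}^0=\bar{A}_{j-t+1}^{j}\bar{A}_{j+1}^{k-1}E\pce{w}^0=\bar{A}_{j-t+1}^{j}\pce{x}_k^{j,\star}$. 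There is no real obstacle here; the proof is essentially bookkeeping. The only point that requires minor care is the distinction between the inhomogeneous recursion for $j=-2$ (which needs the $\tilde{F}_{N-i}$ convolution sum) and the purely homogeneous ones for $j\neq -2$, and keeping the index conventions of $\bar{A}_{k_1}^{k_2}$ consistent with the edge cases $k_1>k_2$ where it equals~$I$.
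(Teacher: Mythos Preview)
Your proposal is correct and follows essentially the same approach as the paper: plug the optimal feedback~\eqref{eq:SolutionPCE} into the PCE dynamics~\eqref{eq:SysPCE}, unroll the resulting closed-loop recursions case by case, and derive~\eqref{eq:Observation} from the semigroup property of $\bar{A}_{k_1}^{k_2}$. Your write-up is in fact more explicit than the paper's, which simply states that the trajectories are obtained by substitution and then performs exactly the same splitting $\bar{A}_{j+1}^{k+t-1}=\bar{A}_{k}^{k+t-1}\bar{A}_{j+1}^{k-1}$ and $\bar{A}_{j-t+1}^{k-1}=\bar{A}_{j-t+1}^{j}\bar{A}_{j+1}^{k-1}$ for the two observations.
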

\begin{proof}
Plugging the optimal feedback~\eqref{eq:SolutionPCE} into the PCE coefficient dynamics~\eqref{eq:SysPCE}, one obtains $\{\pce{x}_k^{j,\star}\}_{k=0}^N$ for $j\in\I_{[-2,N-1]}$, which is illustrated in Figure~\ref{fig:PCEIllustration}. 
	
For fixed PCE dimension $j\in\I_{[0,k-1]}$, we obtain $\pce{x}_{k+t}^{j,\star} = \bar{A}_{j+1}^{k+t-1} E\pce{w}^0 = \bar{A}_{k}^{k+t-1}\bar{A}_{j+1}^{k-1} E\pce{w}^0 = \bar{A}_{k}^{k+t-1}\pce{x}_k^{j,\star}$, $t\in\I_{[0,N-k]}$, cf. Figure~\ref{fig:PCEIllustration}. 
 
Moreover, we freeze the time step $k\in\I_{[1,N]}$ and project the points $\pce{x}_k^{j,\star}$, $j\in\I_{[0,k-1]}$ onto the $\pce{x}_k^j-j$~plane in Figure~\ref{fig:PCEIllustration}. Each line with circle markers in the projection represents $\{\pce{x}_k^{j,\star}\}_{j=0}^{k-1}$ wherein $j$ is the running index for fixed $k$.
Observe that the structure of OCP~\eqref{eq:StochPCEW} links the PCE coefficients for fixed time step $k$. Specifically, we have that $\pce{x}_k^{j-t,\star} = \bar{A}_{j-t+1}^{k-1} E\pce{w}^0 = \bar{A}_{j-t+1}^{j} \bar{A}_{j+1}^{k-1} E\pce{w}^0= \bar{A}_{j-t+1}^{j} \pce{x}_k^{j,\star}$ for $t \in\I_{[0,j]}$.
\end{proof}

\begin{figure}[t]
	\begin{center}
		\includegraphics[width=0.5\textwidth]{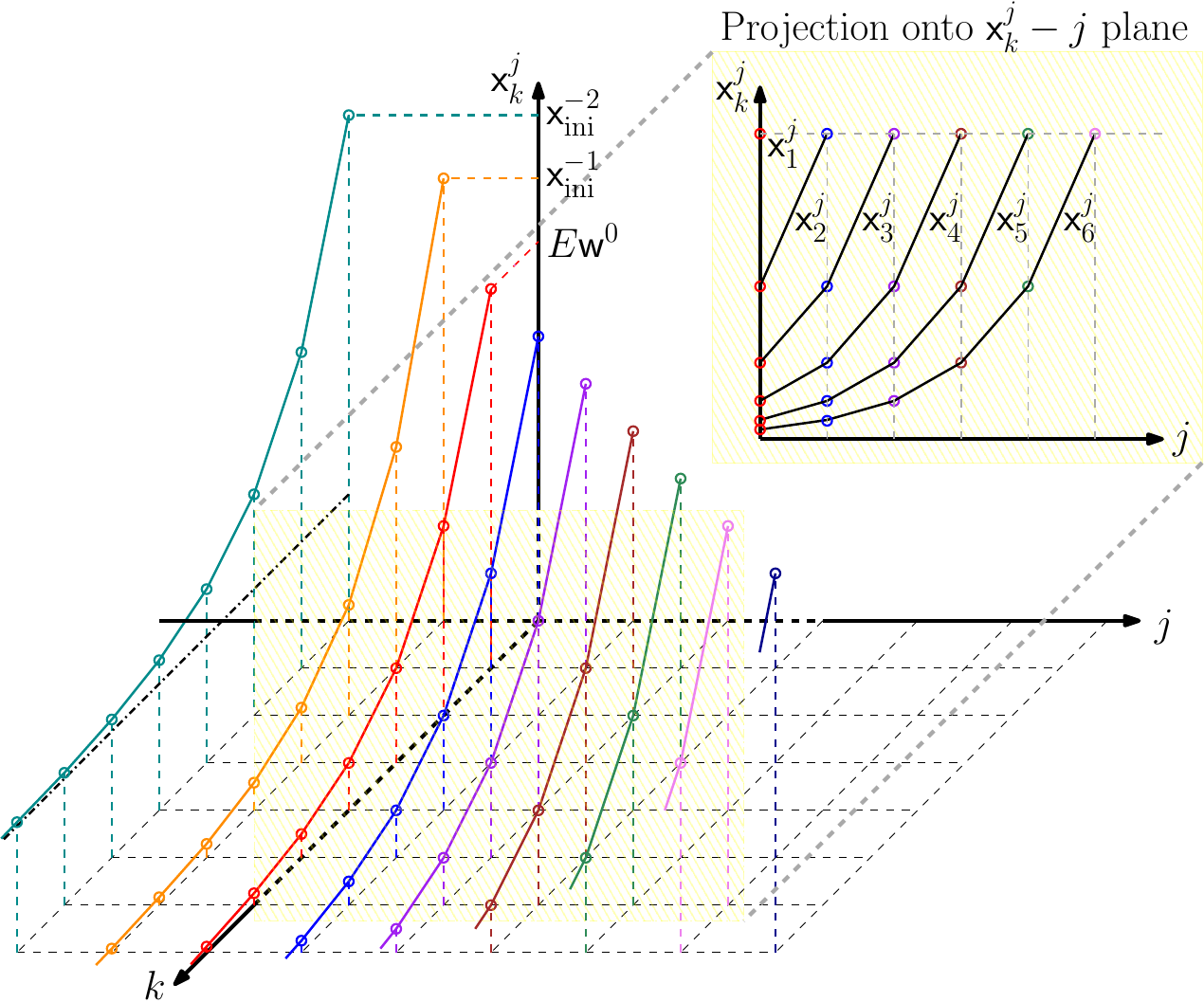}
		\caption{Optimal trajectories of OCP~\eqref{eq:StochPCEj} in PCE coefficients $\pce{x}^{j,\star}$, $j\in\I_{[-2,N-1]}$. \label{fig:PCEIllustration}} 		
	\end{center}
\end{figure}

Figure~\ref{fig:PCEIllustration} illustrates the core idea behind the crucial insight~\eqref{eq:Observation} of the previous result. One can see that $\{\pce{x}_k^{j,\star}\}_{k=0}^{N}$ for any fixed $j\in \I_{[-2,N-1]}$ converges to its corresponding steady state over time. Depending on the system dynamics and on the weighting matrices, there are also potentially leaving arcs at the end of the trajectories, which is related to the turnpike phenomenon, see \citet{faulwasser22turnpike}. Additionally, one sees that $\pce{x}_{j+1}^{j,\star}=E\pce{w}^0$ for all $j\in\I_{[0,N-1]}$, i.e., the trajectories $\{\pce{x}_{k}^{j,\star}\}_{k=j+1}^{N}$, $j\in\I_{[0,N-1]}$ have the same initial value $E\pce{w}^0$ at time step $j+1$. Equation~\eqref{eq:Observation2} shows that for fixed time index~$k$, $\pce{x}_k^{j,\star}$ decays as $j$ decreases. This is in line with the intuition that the most recent disturbances are dominant in the PCE description of the state variable $X_k^\star$.

\subsection{Moving-Horizon PCE Series Truncation}
Proposition~\ref{pro:Basis} suggests that the dimension $L$ of the joint basis $\Phi$ grows linearly with the horizon $N$ due to the process disturbances. To accelerate the computation in numerical implementations, it is often desirable to truncate the PCE. As Figure~\ref{fig:PCEIllustration} indicates, to minimize the truncation error at time step $k$, we may only consider the basis related to the initial condition and to the last $p$ disturbances. That is, we consider the $(p+2)$-dimensional truncated basis $\Phi_k^{\text{trun}} = \Phi^{\tini}\cup\big(\cup_{\tilde{k} = k-p}^{k-1} \Psi^{w_{\tilde{k}}}\big)$. Figure~\ref{fig:Truncation} illustrates the PCE coefficients in the truncated basis $\Phi^{\text{trun}}$ with $p=2$ as an example, where the red box includes the PCE coefficients of $\Phi^{\tini}$, while the blue boxes include the PCE coefficients of $\cup_{\tilde{k} = k-p}^{k-1}\Psi^{w_{\tilde{k}}}\setminus \{\phi^{-2}\}$.
As the optimal trajectories of PCE coefficients are known, we next quantify the error stemming from this moving-horizon series truncation. Moreover, a result related to the upper bound of the truncation error will be shown in Lemma~\ref{lem:FiniteApprox}, Section~\ref{sec:Stationary}.
\begin{lem}[Quantification of truncation errors] \label{lem:Truncation}
Let Assumption~\ref{ass:Simplification} hold. Consider OCP~\eqref{eq:StochOCP} and the truncated moving-horizon basis $\Phi_k^{\text{trun}}$. Then the truncation error $\Delta X_k(p+2)\coloneqq X_k^{\star} - X_k^{\text{trun},\star}$ at time step $k\in \I_{[0,N-1]}$~reads
\[
\Delta X_k(p+2) = \begin{cases} 0, &\text{for } k\leq p \\
	\sum_{j=0}^{k-p-1} \bar{A}_{j+1}^{k-1}E\pce{w}^0\phi^j, &\text{otherwise }\end{cases},
\]
where the argument $(p+2)$ refers to the dimension of $\Phi_k^{\text{trun}}$,  and $X^{\text{trun},\star}$ is the random variable obtained from the PCE solution in the basis $\Phi^{\text{trun}}$.
\end{lem}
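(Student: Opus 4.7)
The plan is to exploit the full separability of OCP~\eqref{eq:StochPCE} into the decoupled subproblems~\eqref{eq:StochPCEj}, one per basis index $j$. Because no variable is shared across these subproblems, retaining only the indices in $\Phi_k^{\text{trun}}$ leaves the optimal coefficients of the kept subproblems unchanged, so
\[
X_k^{\text{trun},\star} \;=\; \sum_{j \,:\, \phi^j \in \Phi_k^{\text{trun}}} \pce{x}_k^{j,\star}\phi^j,
\]
with the same coefficients $\pce{x}_k^{j,\star}$ as in the full solution. Subtracting this from $X_k^\star = \sum_{j=-2}^{N-1}\pce{x}_k^{j,\star}\phi^j$ expresses the truncation error as a sum of the discarded terms only.

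It then remains to identify which discarded terms are nonzero. The positive dropped indices at time $k$ are $\{0,\ldots,k-p-1\}\cup\{k,\ldots,N-1\}$ (intersected with $\mathbb{N}$); the indices $-2,-1$ are never dropped. For every $j\geq k$, Proposition~\ref{prop:PCETraj} gives $\pce{x}_k^{j,\star}=0$ by causality, so these terms vanish from the error. A case split on $k$ then closes the argument: if $k\leq p$ the remaining discarded set $\{0,\ldots,k-p-1\}$ is empty, giving $\Delta X_k(p+2)=0$; if $k>p$, substituting $\pce{x}_k^{j,\star}=\bar{A}_{j+1}^{k-1}E\pce{w}^0$ from Proposition~\ref{prop:PCETraj} (valid for $j\in\I_{[0,k-1]}$) delivers the claimed expression $\sum_{j=0}^{k-p-1}\bar{A}_{j+1}^{k-1}E\pce{w}^0\phi^j$.

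The only subtle step is the first one, namely that the retained PCE coefficients of the truncated solution coincide with those of the full solution. This relies on the observation---already exploited in Section~\ref{subsec:SolutionPCE}---that OCP~\eqref{eq:StochPCE} decomposes additively into $L=N+2$ independent problems with no coupling in the dynamics nor in the cost; hence dropping some $j$-indexed subproblems does not perturb the solutions of the remaining ones. Once this decoupling is invoked, the rest is bookkeeping about which coefficients are dropped versus which are already rendered zero by causality, so no further estimation is required.
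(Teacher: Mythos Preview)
Your proposal is correct and follows essentially the same approach as the paper: invoke the separability of OCP~\eqref{eq:StochPCE} to conclude that the retained PCE coefficients in the truncated solution coincide with those of the full solution, then subtract and use the explicit trajectories~\eqref{eq:PCEXTraj} from Proposition~\ref{prop:PCETraj} to evaluate the discarded terms. The paper's proof is slightly terser---it writes $X_k^\star$ directly as $\sum_{j=-2}^{k-1}\pce{x}_k^{j,\star}\phi^j$, so the vanishing of the $j\geq k$ terms is already baked in---but otherwise the arguments match.
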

\begin{proof}
As the reformulated OCP~\eqref{eq:StochPCE} can be solved separately with respect to each PCE dimension~$j\in\I_{[-2,N-1]}$, we obtain $X_k^{\text{trun},\star}{=} \sum_{j\in\{-2,-1,k-p,...,k-1\}} \pce{x}_k^{j,\star}\phi^j$. For the case $k\leq p$, there is no truncation error. Due to the trajectories~\eqref{eq:PCEXTraj}, the error for all $k \geq p+1$ reads $ X_k^{\star} {-} X_k^{\text{trun},\star} {=} \sum_{j=0}^{k-p-1} \pce{x}_k^{j,\star}\phi^j {=}\sum_{j=0}^{k-p-1} \bar{A}_{j+1}^{k-1}E\pce{w}^0\phi^j$.
\end{proof}
\begin{figure}[t]
	\begin{center}
		\includegraphics[width=0.45\textwidth]{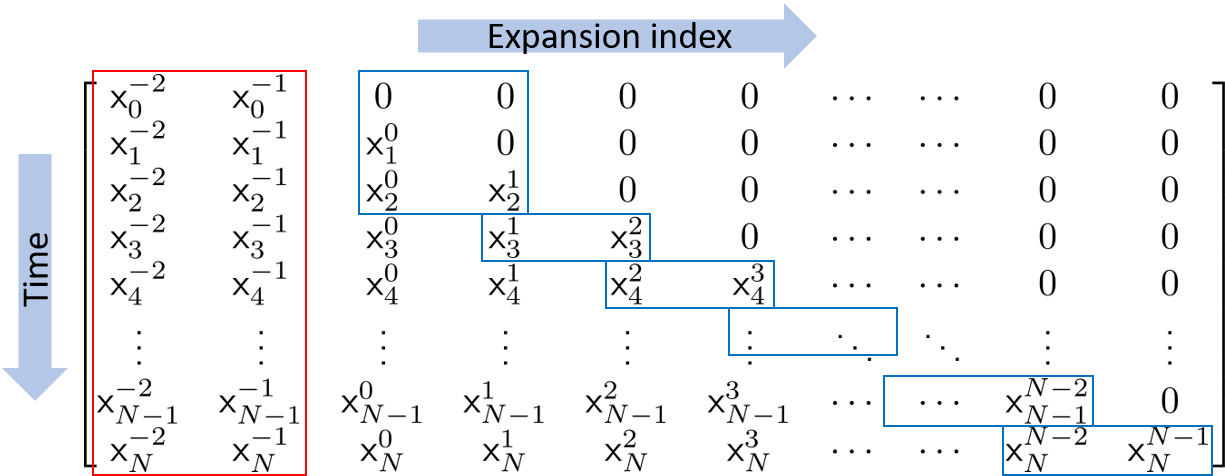}
		\caption{Truncation of PCE coefficients.} \label{fig:Truncation}		
	\end{center}
\end{figure}

\subsection{Solution in Random Variables}
Leveraging the solution to OCP~\eqref{eq:StochPCEj} for all $j\in\I_{[-2,N-1]}$, we obtain the solution to the original OCP~\eqref{eq:StochOCP}.
\begin{thm}[Random variable solution] \label{theorem:SolutionFinite}
Let Assumption~\ref{ass:Simplification} hold. The unique solution to OCP~\eqref{eq:StochOCP} is
\begin{subequations}
\begin{equation} \label{eq:FiniteInput}
	U_k^\star = K_{N-k} X_k^\star + F_{N-k} \mean[W],
\end{equation}
\text{while the corresponding minimum cost reads}
\begin{multline}\label{eq:FiniteMinimumCost}
	J_N(X\ini,U^\star)=  \|X\ini\|_{P_N}^2 + 2\mean[W]^\top G_N^\top\mean[X\ini] \\
	+ \tr(\textstyle{\sum_{j=0}^{N-1}}P_j E\Sigma[W]E^\top) +  \|\mean[W]\|_{S_N}^2.
	\end{multline}
\end{subequations}
\end{thm}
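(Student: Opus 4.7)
The plan is to assemble the random-variable solution to OCP~\eqref{eq:StochOCP} from the decoupled PCE-coefficient solutions already in hand from Lemma~\ref{lem:SolutionPCEj}. Under Assumption~\ref{ass:Simplification}, the reformulation in the joint basis $\Phi$ gives $U_k^\star = \sum_{j=-2}^{N-1}\pce{u}_k^{j,\star}\phi^j$, $X_k^\star = \sum_{j=-2}^{N-1}\pce{x}_k^{j,\star}\phi^j$, and $J_N(X\ini,U^\star) = \sum_{j=-2}^{N-1} J_N(\pce{x}\ini^j,\pce{u}^{j,\star})\|\phi^j\|^2$, so the proof reduces to substituting the branches of \eqref{eq:SolutionPCE}--\eqref{eq:SolutionCost} and simplifying.

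For the feedback \eqref{eq:FiniteInput}, I would first substitute \eqref{eq:SolutionPCE} into $U_k^\star = \sum_j \pce{u}_k^{j,\star}\phi^j$. Since the linear term $K_{N-k}\pce{x}_k^{j,\star}$ appears in every branch while the affine correction $F_{N-k}\mean[W]$ occurs only for $j=-2$, I can factor $K_{N-k}$ out of the sum and exploit $\phi^{-2}=1$ to collapse the result to $U_k^\star = K_{N-k}\sum_j \pce{x}_k^{j,\star}\phi^j + F_{N-k}\mean[W] = K_{N-k}X_k^\star + F_{N-k}\mean[W]$.

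For the cost \eqref{eq:FiniteMinimumCost}, I would multiply each of the three branches of \eqref{eq:SolutionCost} by $\|\phi^j\|^2$ and sum over $j$. For $j\in\I_{[0,N-1]}$ the $\|\phi^j\|^2$ factor cancels the denominator, and after the re-indexing $i = N-j-1$ the contributions collapse into $\tr(\sum_{i=0}^{N-1} P_i E\Sigma[W] E^\top)$. The $j=-1$ branch contributes $\tr(P_N\Sigma[X\ini])$ after analogous cancellation, and the $j=-2$ branch contributes the three mean-value terms unchanged, since $\|\phi^{-2}\|^2 = 1$. The key reassembly step is the identity $\|X\ini\|_{P_N}^2 = \|\mean[X\ini]\|_{P_N}^2 + \tr(P_N\Sigma[X\ini])$, which follows directly from the orthogonality \eqref{eq:Orthogonality} together with the moment formula \eqref{eq:MomentPCE}; it fuses the $j=-2$ and $j=-1$ contributions into the single weighted norm $\|X\ini\|_{P_N}^2$ appearing in \eqref{eq:FiniteMinimumCost}.

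Uniqueness is inherited from the decoupled problems: each OCP~\eqref{eq:StochPCEj} is a strictly convex quadratic program because $R\succ 0$, so each $\{\pce{u}_k^{j,\star}\}_{k=0}^{N-1}$ is unique, and Galerkin projection in the basis $\Phi$ is a bijection between filtration-adapted inputs $\{U_k\}$ and their PCE-coefficient tuples (cf.\ Proposition~\ref{pro:Basis}), transferring uniqueness to the random-variable problem. The only genuinely non-routine step in the entire argument is the covariance/trace decomposition of $\|X\ini\|_{P_N}^2$; everything else is careful bookkeeping across the three case distinctions of Lemma~\ref{lem:SolutionPCEj}.
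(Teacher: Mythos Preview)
Your proposal is correct and follows essentially the same approach as the paper's proof: assemble the feedback by summing the branches of \eqref{eq:SolutionPCE} and factoring out $K_{N-k}$, obtain the cost by summing \eqref{eq:SolutionCost} weighted by $\|\phi^j\|^2$ and invoking the identity $\|X\ini\|_{P_N}^2=\|\mean[X\ini]\|_{P_N}^2+\tr(P_N\Sigma[X\ini])$, and establish uniqueness via Proposition~\ref{pro:Basis} together with the strict convexity of each decoupled subproblem~\eqref{eq:StochPCEj}. The only cosmetic difference is that the paper writes the sum up to $j=k-1$ rather than $j=N-1$, but since causality forces $\pce{u}_k^{j,\star}=0$ for $j\geq k$ this is immaterial.
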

\begin{proof}
	The feedback~\eqref{eq:FiniteInput} immediately follows from $U_k^\star= \sum_{j=-2}^{k-1}\pce{u}_k^{j,\star}\phi^{j}$ and Lemma~\ref{lem:SolutionPCEj}, i.e.,
	\begin{align*}
			U_k^\star &=(K_{N-k}\pce{x}_k^{-2,\star}{+}F_{N-k} \mean[W])\phi^{-2}{+}\textstyle{\sum_{j=-1}^{k-1}}K_{N-k}\pce{x}_k^{j,\star}\phi^j \\
		&= K_{N-k} \big(\textstyle{\sum_{j=-2}^{k-1}} \pce{x}_k^{j,\star}\phi^{j} \big)+ F_{N-k} \mean[W],
	\end{align*}
	where $\sum_{j=-2}^{k-1} \pce{x}_k^{j,\star}\phi^{j}=X_k^\star$. It remains to prove the uniqueness of~\eqref{eq:FiniteInput}. First Proposition~\ref{pro:Basis} shows that any optimal solution to OCP~\eqref{eq:StochOCP} lives in the space spanned by the joint basis~\eqref{eq:FiniteBasis}.
	Since the solution to OCP~\eqref{eq:StochPCEj} for all $j\in\I_{[-2,N-1]}$ is unique,
	$J_N(\pce{x}\ini^j,\pce{u}^{j,\star}) < J_N(\pce{x}\ini^j,\pce{u}^{j})$
	holds for any feasible $\pce{u}^{j}\neq \pce{u}^{j,\star}$. Then, for any $U=\sum_{j=-2}^{N-1}\pce{u}^{j}\phi^j \neq \sum_{j=-2}^{N-1}\pce{u}^{j,\star}\phi^j=U^\star $, it follows
	\begin{multline*}
		J_N(\Xini,U^\star) = \textstyle{\sum_{j=-2}^{N-1}} J_N(\pce{x}\ini^j,\pce{u}^{j,\star}) \|\phi^j\|^2\\
		< \textstyle{\sum_{j=-2}^{N-1}} J_N(\pce{x}\ini^j,\pce{u}^{j}) \|\phi^j\|^2 = J_N(\Xini,U).
	\end{multline*}
	We conclude the uniqueness of the solution~\eqref{eq:FiniteInput}. As $J_N(\pce{x}\ini^j,\pce{u}^{j,\star})$, $j\in\I_{[-2,N-1]}$ has been given by \eqref{eq:SolutionCost} in Lemma~\ref{lem:SolutionPCEj}, we also obtain the minimum cost~\eqref{eq:FiniteMinimumCost} using that $\|X\ini\|_{P_N}^2=\|\mean[X\ini]\|_{P_N}^2 +\tr(P_N \Sigma[X\ini])$.
\end{proof}
The optimal feedback~\eqref{eq:FiniteInput} is a well-known result, especially for stochastic LTI system with zero-mean Gaussian disturbances \citep{astrom70introduction,anderson79optimal}. Deviating from the moment-based approach, Theorem~\ref{theorem:SolutionFinite} computes the solution to OCP~\eqref{eq:StochOCP} directly in random variables and generalizes the results to non-Gaussian uncertainties with non-zero mean. With trajectories of PCE coefficients shown in Proposition~\ref{prop:PCETraj}, PCE allows to compute the optimal state trajectories in random variables in the non-Gaussian setting with constructive error analysis, cf. Lemma~\ref{lem:Truncation}.

\subsection{Relaxation of Assumptions} \label{sec:Extensions}
The reader may ask if and how Assumptions~\ref{ass:ExactIniW} and~\ref{ass:Simplification} can be relaxed for Theorem~\ref{theorem:SolutionFinite}. Indeed, the answer is affirmative for both assumptions. First we consider to drop Assumption~\ref{ass:Simplification} and let Assumption~\ref{ass:ExactIniW} still hold. That is, we consider a generalized case of $L\ini>2$ or $L_w>2$ with $n_w\geq1$ and $L\ini$, $L_w\in\N$. Similarly, we construct the joint basis $\Phi=\Phi^{\tini}\cup\Psi^w$, which has been given in \eqref{eq:BasisElement} element-wise. Since the dynamics~\eqref{eq:SysPCE} and the weighting matrices~\eqref{eq:StochInfPCEj} are the same for all PCE coefficients, one sees that under a suitable basis indexing a result similar to Lemma~\ref{lem:SolutionPCEj} can be obtained. Thus, Theorem~\ref{theorem:SolutionFinite} is still valid.

One can further extend the results to the case of $L\ini=\infty$ or $L_w=\infty$, i.e., when Assumption~\ref{ass:ExactIniW} is dropped. In this case, we construct the joint basis $\Phi$ in the same way and have $L=L\ini+N(L_w-1)=\infty$. Therefore, the outer sum in OCP~\eqref{eq:StochPCE} over PCE coefficients is $j\in\I_{[-2,\infty)}$, while system dynamics~\eqref{eq:SysPCE} remain the same for each PCE dimension $j$. This way, we have the same decoupled OCP~\eqref{eq:StochPCEj} in terms of PCE coefficients, which validates the optimal solution via PCE in Lemma~\ref{lem:SolutionPCEj}. Hence, we obtain the same optimal feedback~\eqref{eq:FiniteInput} and minimum cost~\eqref{eq:FiniteMinimumCost} in Theorem~\ref{theorem:SolutionFinite}. 

Consider system~\eqref{eq:Sys} and let the disturbances $W_k$, $k\in\N$ be independent but not identically distributed. Moreover, the distributions of $W_k$,~$k\in\N$ are assumed to be known. Hence, the PCE coefficients of the disturbances are also known in advance. For each disturbance $W_k$, $k\in\N$, there is a specific corresponding basis function in the joint basis~\eqref{eq:BasisElement}. Compared to Lemma~\ref{lem:SolutionPCEj}, the solution in the PCE coefficients, $j\in\N$, remain valid. The solution for $j=-2$ reads
\[
	\pce{u}_k^{j,\star} = K_{N-k}\pce{x}_k^{j,\star} + F_{N-k} \mean[W_k] +F_{N-k}^\prime.
\]
By treating $\mean[W_k]$, $k\in\I_{[0,N-1]}$ as exogenous inputs, the computation of $F_{N-k}^\prime$ proceeds in the same manner as Theorem~1 by \citet{singh17extended} and is thus omitted. Then the optimal feedback in random variables is of the form $U_k^{\star} = K_{N-k}X_k^{\star} + F_{N-k} \mean[W_k] +F_{N-k}^\prime$.
\section{Stochastic LQR of Infinite Horizon and Its Asymptotics} \label{sec:InfiniteLQ}
In this section, we extend the obtained results in Section~\ref{sec:FiniteLQ} to infinite horizon and analyze the convergence property of the infinite-horizon optimal trajectories.
\subsection{Stochastic LQ Optimal Control of Infinite Horizon} \label{sec:SolutionStochLQRInf}
The infinite-horizon counterpart to OCP~\eqref{eq:StochOCP} reads
\begin{equation}\label{eq:StochOCPInf}
		\min_{ U_k \in \lsp_{k}(\R^\dimu),k\in\N^\infty}  ~  \sum_{k=0}^{\infty}  \ell(X_k,U_k)\quad \text{s.t.}\quad \eqref{eq:Sys},
\end{equation}
where the terminal penalty is dropped. The cost functional of OCP~\eqref{eq:StochOCPInf} along $\{U_k\}_{k=0}^\infty$ is denoted by $J_\infty(X\ini,U)$. The covariance propagation
\[
\Sigma[X_{k+1}] {=} \Sigma[AX_k+BU_k] {+} E\Sigma[W_k]E^\top {\succeq} E\Sigma[W_k]E^\top {\succeq} 0
\]
implies that the stage cost
\begin{equation*}
	\ell(X_k,U_k) = \|\mbb E[X_k]\|_Q^2+\|\mbb E[U_k]\|_R^2 + \text{tr}(Q\Sigma[X_k]+R\Sigma[U_k])\geq0
\end{equation*}
over the infinite horizon. Therefore, the minimum cost may be infinite and we need to invoke the notion of overtaking optimality, cf. Definition~\ref{def:OvertakingDeter}.

Similar to \eqref{eq:FiniteBasis}, we construct the basis $\Phi_\infty \coloneqq\Phi^{\text{ini}}\cup \Psi_\infty^w$ with $\Psi_\infty^w \coloneqq \cup_{k=0}^{\infty}\Psi^{w_k}$. We enumerate it similar to~\eqref{eq:BasisSim} as $\Phi_\infty=\{\phi_\infty^j\}_{j=-2}^\infty$. Compared to the basis $\Phi$ for OCP~\eqref{eq:StochOCP} with $N\in\N$, $\Phi_\infty$ appends the basis for $W_k$, $k\geq N$ at the end. That is, $\phi_\infty^j=\phi^j$ holds for all $j\in\I_{[-2,N-1]}$ and $\Phi_\infty\setminus\Phi = \{\phi_\infty^j\}_{j=N}^\infty=\cup_{j=N}^\infty\{\psi^1(\xi_j)\}$. Therefore, we omit the subscript $\cdot_\infty$ of $\phi_\infty^j$. Reformulation of OCP~\eqref{eq:StochOCPInf} in the basis $\Phi_\infty$ gives for $j\in\I_{[-2,\infty)}$
\begin{equation} \label{eq:StochInfPCEj}
	\min_{\pce{u}_k^j \in\R^{n_u}, k\in\I_{[0,\infty)}}~  \displaystyle{\sum_{k=0}^{\infty}} \ell(\pce{x}_k^j,\pce{u}_k^j) \quad \text{s.t.} \quad \eqref{eq:SysPCE}.
\end{equation}
Recall that the superscript $\cdot^\sinf$ denotes the optimal solutions to OCPs with infinite horizon.
\begin{lem}[Infinite-horizon optimal solution] \label{lem:Infinite}
Consider OCP~\eqref{eq:StochInfPCEj} for $j\in\I_{[-2,\infty)}$ and let Assumption~\ref{ass:Simplification} hold. Then, for all $j\in\I_{[-2,\infty)}$, the unique overtakingly optimal solution is
\begin{equation*} 
	\pce{u}_k^{j,\sinf} = \begin{cases}  K\pce{x}_k^{j,\sinf} + F\mean[W], &\text{for } j=-2\\
		K\pce{x}_k^{j,\sinf}, &\text{otherwise}
	\end{cases}.
\end{equation*}
Hence, the unique overtakingly optimal solution to OCP~\eqref{eq:StochOCPInf} is $U_k^\sinf =KX_k^\sinf + F \mean[W]$.
\end{lem}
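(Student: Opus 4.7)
The plan is to mimic the three-step decomposition used in the proof of Lemma~\ref{lem:SolutionPCEj} and to push the finite-horizon results from Section~\ref{sec:FiniteLQ} to their infinite-horizon limits, invoking overtaking optimality where the stage cost does not vanish. Under the assumed stabilizability of $(A,B)$ and detectability of $(A,Q^{1/2})$, the Riccati recursion~\eqref{eq:RiccatiP} admits a unique stabilizing stationary solution $P$ with $K=-(R+B^\top PB)^{-1}B^\top PA$ such that $A+BK$ is Schur; likewise, since $A+BK$ is Schur, equation~\eqref{eq:RiccatiG} has a unique stationary solution $G$, which yields a stationary $F=-(R+B^\top PB)^{-1}B^\top(PE+G)$. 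So the closed forms $\pce{u}_k^{j,\sinf}$ in the statement are well-defined.

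The second step is to treat the three index regimes separately. For $j\in\I_{[0,\infty)}$, causality together with~\eqref{eq:PCECoe} forces $\pce{x}_k^j=\pce{u}_k^j=0$ for $k\leq j$ and $\pce{x}_{j+1}^j=E\pce{w}^0$, so OCP~\eqref{eq:StochInfPCEj} reduces to a standard infinite-horizon deterministic LQR on a homogeneous LTI system with initial state $E\pce{w}^0$ at time $j+1$ and finite optimal cost $\pce{w}^{0\top}E^\top PE\pce{w}^0$; classical LQR theory yields the unique optimal stationary feedback $\pce{u}_k^{j,\sinf}=K\pce{x}_k^{j,\sinf}$. The case $j=-1$ is the same problem with no subsequent disturbance but with initial state $\pce{x}\ini^{-1}$ at $k=0$, and is handled identically. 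For $j=-2$, the dynamics are affine with constant forcing $E\mean[W]$, so~\eqref{eq:SolutionDeterLQRInf} directly gives $\pce{u}_k^{-2,\sinf}=K\pce{x}_k^{-2,\sinf}+F\mean[W]$, where overtaking optimality (rather than ordinary optimality) is required because the stage cost need not vanish on the affine steady state.

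The third step is uniqueness and the passage back to random variables. For each $j$, the strict convexity stemming from $R\succ 0$ and the uniqueness of the stabilizing Riccati solution render $\pce{u}^{j,\sinf}$ the unique overtakingly optimal PCE trajectory; moreover, any optimal control for OCP~\eqref{eq:StochOCPInf} must live in the span of $\Phi_\infty$ by the natural infinite-horizon extension of Proposition~\ref{pro:Basis}. The decoupling argument already used in Theorem~\ref{theorem:SolutionFinite} then lifts to infinite horizon: for any competitor $U=\sum_{j=-2}^{\infty}\pce{u}^j\phi^j$, orthogonality yields
\begin{equation*}
J_N(\Xini,U)-J_N(\Xini,U^\sinf)=\sum_{j=-2}^{\infty}\bigl(J_N(\pce{x}\ini^j,\pce{u}^j)-J_N(\pce{x}\ini^j,\pce{u}^{j,\sinf})\bigr)\|\phi^j\|^2,
\end{equation*}
so nonnegativity of the $\liminf$ for each $j$ transfers to the full problem and uniqueness is inherited. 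Finally, summing the closed-form PCE feedbacks against the basis and using $X_k^\sinf=\sum_{j=-2}^{k-1}\pce{x}_k^{j,\sinf}\phi^j$ together with $\pce{w}_k^{-2}=\mean[W]$ collapses the two cases into $U_k^\sinf=KX_k^\sinf+F\mean[W]$.

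The main obstacle is the interchange of $\liminf$ with the infinite sum over PCE dimensions when validating overtaking optimality at the random-variable level: this requires showing that the tail contributions from indices $j\geq N$ vanish in the relevant limit, which follows from the fact that $\pce{x}_k^{j,\sinf}=\pce{u}_k^{j,\sinf}=0$ for $k\leq j$, so at finite horizon $N$ only finitely many PCE subproblems contribute non-trivially. Once this bookkeeping is in place, the decoupling makes the infinite-horizon statement a direct consequence of the well-known affine-system LQR result~\eqref{eq:SolutionDeterLQRInf} applied in each PCE dimension.
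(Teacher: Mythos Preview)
Your proposal is correct and follows essentially the same route as the paper: both arguments invoke the infinite-horizon analogue of Proposition~\ref{pro:Basis} to confine optimal solutions to $\operatorname{span}\Phi_\infty$, solve each decoupled PCE subproblem via the deterministic affine LQR result~\eqref{eq:SolutionDeterLQRInf} (overtaking for $j=-2$, strongly optimal otherwise), and then recombine via $U_k^\sinf=\sum_{j}\pce{u}_k^{j,\sinf}\phi^j$. Your treatment is in fact more explicit than the paper's on two points the paper leaves implicit---the well-posedness of the stationary $K,F$ and the $\liminf$/sum interchange---but these are elaborations of the same strategy rather than a different approach.
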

\begin{proof}
	Due to Assumptions~\ref{ass:Simplification} the initial condition $\Xini$ and disturbance $W_k$, $k\in\N^\infty$ admit exact PCEs in the basis $\Phi_\infty$. Similar to Proposition~\ref{pro:Basis}, we conclude that the overtakingly optimal solution to OCP~\eqref{eq:StochOCPInf} lives in the space spanned by the basis $\Phi_\infty$.

  	Consider the deterministic OCP~\eqref{eq:StochInfPCEj} for all $j\in\I_{[-2,\infty)}$. From the established results~\eqref{eq:SolutionDeterLQRInf}, one sees that $\pce{u}_k^{{-2},\sinf} = K\pce{x}_k^{{-2},\sinf} + F \mean[W]$ is the unique overtakingly optimal solution to OCP~\eqref{eq:StochInfPCEj} for $j=-2$. Especially, the solution is strongly optimal for the case $\mean[W]=0$, i.e., $J_\infty(\pce{x}\ini^{-2},\pce{u}^{-2,\sinf})<\infty$ and $J_\infty(\pce{x}\ini^{-2},\pce{u}^{-2,\sinf}) < J_\infty(\pce{x}\ini^{-2},\pce{u}^{-2})$ holds for any $\pce{u}^{-2}\neq \pce{u}^{-2,\sinf}$. Moreover, for all $j\in\I_{[-1,\infty)}$, $\pce{u}_k^{j,\sinf} = K\pce{x}_k^{j,\sinf}$ is the unique strongly optimal solution to OCP~\eqref{eq:StochInfPCEj} since the minimum cost is finite. Thus, it is also overtakingly optimal. Hence, the unique overtakingly optimal feedback for OCP~\eqref{eq:StochOCPInf} becomes $U_k^\sinf = (K\pce{x}_k^{-2,\sinf}{+}F \mean[W])\phi^{-2} + \sum_{j=-1}^{\infty} K\pce{x}_k^{j,\sinf}\phi^j = KX_k^\sinf + F \mean[W]$.
\end{proof}
Note that in Lemma~\ref{lem:Infinite} Assumption~\ref{ass:Simplification} is made only for the ease of notation and can be dropped as discussed in Section~\ref{sec:Extensions}.
	
Given the above optimal state feedback to OCP~\eqref{eq:StochOCPInf}, we first compute the optimal state trajectories in PCE coefficients for all $ j\in \I_{[-2,\infty)}$
\begin{equation*}
	\pce{x}_k^{j,\sinf} = \begin{cases}
		\tilde{A}^k \pce{x}\ini^{-2} {+} (I{-}\tilde{A})^{-1}(I{-}\tilde{A}^k)\tilde{F}\mean[W],  &\text{for } j=-2\\
		\tilde{A}^k\pce{x}\ini^{-1},  &\text{for } j=-1\\
			0,\hfill\text{for } k\leq j,\hspace*{16pt}&j\in\N^\infty\\
			\tilde{A}^{k-j-1} E\pce{w}^0,\hfill \text{for } k\geq j+1, &  j\in\N^\infty
	\end{cases}
\end{equation*}
with $\tilde{A}= A+BK$ and $\tilde{F} = BF+E$. The trajectory for $j=-2$ follows from $\pce{x}_k^{-2,\sinf} = \tilde{A}^k \pce{x}\ini^{-2} + \sum_{j=0}^{k-1}\tilde{A}^j\tilde{F}\mean[W]$. Recall that we assume $(A,B)$ stabilizable and $(A,Q^{1/2})$ detectable. Thus, $K$ is stabilizing and all eigenvalues of $\tilde{A}$ lie inside the unit circle \citep{anderson89optimal}. Therefore, for $k\to\infty$, $\pce{x}_k^{j,\sinf}$, $j\in \I_{[-2,\infty)}$ converge to their corresponding steady states exponentially with the same rate $\tilde{A}$, which is in line with the optimal finite-horizon trajectories sketched in Figure~\ref{fig:PCEIllustration}.

Moreover, for $j\in\N^\infty$ the PCE coefficients $\pce{x}_k^{j,\sinf}$ that are related to disturbances satisfy
\[
\pce{x}_k^{j,\sinf} = \pce{x}_{k-j}^{0,\sinf} = \pce{x}_{k+\tilde{k}}^{j+\tilde{k},\sinf},\quad \forall k\geq j+1,~\tilde{k}\in\N.
\]
That is, if the PCE coefficient dimension $j$ and the time step $k$ are increased simultaneously by the same value~$\tilde{k}$, the PCE coefficient is constant. Indeed this is a special case of Proposition~\ref{prop:PCETraj} for $\bar{A}_{k_1}^{k_2}=\tilde{A}^{k_2-k_1+1}$.

Since $X_k^\sinf =\sum_{j=-2}^{k-1}\pce{x}_k^{j,\sinf}\phi^{j}$, we can express the optimal state trajectory of OCP~\eqref{eq:StochOCPInf} in PCE coefficients
\begin{equation}  \label{eq:InfXk}
	\begin{split}
		X_k^\sinf =\Big( (I-\tilde{A})^{-1}(I-\tilde{A}^k)\tilde{F}\mean[W]+ \tilde{A}^k \pce{x}\ini^{-2}\Big)\phi^{-2} \\
		+ \tilde{A}^k\pce{x}\ini^{-1} \phi^{-1} + \textstyle{\sum_{j=0}^{k-1}} \tilde{A}^{k-j-1}E\pce{w}^0\phi^{j},~ k\geq 1.
	\end{split}
\end{equation}
The last item related to $\phi^{j}$, $j\in\I_{[0,k-1]}$ can be written as $\sum_{j=0}^{k-1} \tilde{A}^{k-j-1}E\pce{w}^0 \phi^{j} = \sum_{j=0}^{k-1} \tilde{A}^{k-j-1}E(W_j-\mean[W_j])$, which summarizes the accumulated influence of the past process disturbances. Recall that the PCE dimension $j$ coincides with the time step at which the disturbance $W_j$ enters the system. Due to the exponential decay of $\pce{x}_k^{j,\sinf}$, $j\in \I_{[0,\infty)}$, one can see that the most recent disturbances are dominant. In case of a finite optimization horizon, the quantification of the truncation error in Lemma~\ref{lem:Truncation} shows a similar behavior.

\subsection{Asymptotics of Optimal Trajectories}
First we recall the Wasserstein metric to quantify the distance between probability measures \citep{ruschendorf85wasserstein,villani09wasserstein}. Notice that in the following definition, $\|Z\|_2\coloneqq \sqrt{Z^\top Z}\in\lsp(\R)$ refers to the 2-norm on $R^{n_z}$ applied to $Z\in\lsp(\R^\dimz)$. That is, for any realization $Z(\omega)\in\R^{n_z}$, the 2-norm reads $\|Z(\omega)\|_2 = \sqrt{Z(\omega)^\top Z(\omega)}\in\R$.
\begin{defn}[Wasserstein metric] \label{def:WS}
	Consider two random variables $Z_1$, $Z_2\in\lsp(\R^\dimz)$ and $q\in[1,\infty]$. The Wasserstein distance of order~$q$ is
	\[
	\mcl{W}_q(Z_1,Z_2) \coloneqq {\inf_{\tilde{Z}_1,\tilde{Z}_2}}\Big(\mean\big[\|\tilde{Z_1}-\tilde{Z_2}\|_2^q\big]^{\frac1q}, \tilde{Z}_1{\sim} Z_1,\tilde{Z}_2{\sim} Z_2\Big),
	\]
	where $\sim$ denotes the equivalence in distribution, i.e., $\tilde{Z}_t$ and $Z_t$, $t\in\{1,2\}$ follow the same distribution.
\end{defn}
With slight abuse of notation, the Wasserstein distance between the measures $\mu_1$, $\mu_2$ is $\mcl{W}_q(\mu_1,\mu_2)\coloneqq\mcl{W}_q(Z_1,Z_2)$ with $\mu_{Z_t}=\mu_t$ for $t\in\{1,2\}$, where $\mu_{Z_t}$ denotes the push-forward measure $\mu_{Z_t}(\cdot)\coloneqq \mu(Z_t^{-1}(\cdot))$. Moreover, two measures or random variables are said to be equivalent in the Wasserstein metric if
\[
\mu_1 \ed \mu_2 \quad \iff \quad \mcl{W}_q(\mu_1,\mu_2)=0.
\]
Note that the order $q$ does not play a role in the equivalence if $\mcl{W}_q(\mu_1,\mu_2)$ exists. Thus $q$ is henceforth omitted.

For the ease of notation, we use the shorthand $(X^\sinf,U^\sinf)$ to denote the optimal trajectory $\{(X_k^\sinf,U_k^\sinf)\}_{k=0}^\infty$ of OCP~\eqref{eq:StochOCPInf} as the superscript $\cdot^\sinf$ refers to infinite horizon. Additionally, the first two moments and cost function of a pair of probability measures $(\mu_X,\mu_U)$ are defined via the corresponding state-input pair. That is,  $\mean  [\mu_X]\coloneqq \mean[X]$, $\Sigma [\mu_X] \coloneqq \Sigma [X]$, and $\ell(\mu_X,\mu_U)= \ell(X,U)$ follow for any $(X,U)\ed (\mu_X,\mu_U)$.
\begin{defn}[Stationary pair]
	$(\bar{X},\bar{U})$ is said to be a stationary pair of system~\eqref{eq:Sys} if $\bar{X} \ed A\bar{X}+B\bar{U}+EW$ holds, where $W$ is the process disturbance independent of $\bar{X}$ and $\bar{U}$. Moreover, $\{(\bar{X}_k,\bar{U}_k)\}_{k=0}^N$, $N\in\N^\infty$ is a stationary trajectory if $(\bar{X}_{k+1},\bar{U}_{k+1})\ed (\bar{X}_k,\bar{U}_k)$, $\forall k\in\I_{[0,N-1]}$ holds.
\end{defn}
In general, as the disturbance $W$ is independent of $\bar{X}$, $\bar{X} = A\bar{X}+B\bar{U}+EW$ does not hold. However, the next lemma gives an explicit expression of a stationary pair in the sense of Wasserstein metric.
\begin{lem}[Infinite-horizon asymptotics] \label{lem:Limit}
	The optimal trajectory $(X^\sinf,U^\sinf)$ of OCP~\eqref{eq:StochOCPInf} converges in probability measure to
	\begin{subequations} \label{eq:InfXU}
		\begin{align}	
			&\hspace{5pt}(\mu_X^\sinf,\mu_U^\sinf)\coloneqq \big(\mu_{X_\infty^{\sinf}},\mu_{U_\infty^{\sinf}}\big) = \lim_{k\to\infty} \big(\mu_{X_k^{\sinf}},\mu_{U_k^{\sinf}}\big),\\	
			&\mu_X^\sinf   \ed (I{-}\tilde{A})^{-1}\tilde{F}\mean[W] {+} \textstyle{\sum_{j=0}^{\infty}} \tilde{A}^{j}E(W_j{-}\mean[W]),\\
			&\mu_U^\sinf \ed K\left((I{-}\tilde{A})^{-1}\tilde{F}{+} F\right) \mean[W] \nonumber\\
			&\hspace{80pt} + K\textstyle{\sum_{j=0}^{\infty}} \tilde{A}^{j}E(W_j{-}\mean[W]).
		\end{align}
	\end{subequations}
	The first two moments of $\mu_X^\sinf$ are
	\begin{subequations} \label{eq:MomentsX}
		\begin{align}
			\mean  [\mu_X^\sinf] &= (I_{n_x}-\tilde{A})^{-1}\tilde{F}\mean[W],\\
			\Sigma [\mu_X^\sinf]  & = \textstyle{\sum_{j=0}^{\infty}} \tilde{A}^{j}E\Sigma[W] E^\top \tilde{A}^{j\top}. \label{eq:MomentsXCovariance}
		\end{align}
	\end{subequations}
	Additionally, any $(X,U)$ satisfying $X\ed\mu_X^\sinf$ and $U=KX+F\mean[W]$ is a stationary pair of system~\eqref{eq:Sys}.
\end{lem}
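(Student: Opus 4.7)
The plan is to start from the closed-form expression \eqref{eq:InfXk} for $X_k^\sinf$ and exploit the Schur stability of $\tilde{A}=A+BK$, which is guaranteed by the standing stabilizability/detectability assumptions. Using $W_j - \mean[W] = \pce{w}^0\phi^j$ for $j\in\N$ and combining the initial-condition terms into $X\ini$, \eqref{eq:InfXk} becomes
\[
X_k^\sinf = \tilde{A}^k X\ini + (I-\tilde{A})^{-1}(I-\tilde{A}^k)\tilde{F}\mean[W] + \sum_{j=0}^{k-1}\tilde{A}^{k-j-1}E(W_j-\mean[W]).
\]

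First, I would establish distributional convergence. Since $\{W_j\}_{j\in\N}$ are i.i.d., the permutation $j\mapsto k-1-j$ preserves their joint law, so
\[
\sum_{j=0}^{k-1}\tilde{A}^{k-j-1}E(W_j-\mean[W]) \ed \sum_{\ell=0}^{k-1}\tilde{A}^{\ell}E(W_\ell-\mean[W]),
\]
and $X_k^\sinf$ equals in distribution the auxiliary variable $Y_k$ obtained by this reindexing. Because $\tilde{A}$ is Schur, $\tilde{A}^k X\ini\to 0$ in $\lsp$ and $(I-\tilde{A}^k)\to I$. Moreover, the orthogonality of the zero-mean independent summands yields $\mean\big[\|\sum_{\ell=k_1}^{k_2}\tilde{A}^\ell E(W_\ell-\mean[W])\|^2\big] = \sum_{\ell=k_1}^{k_2}\tr(\tilde{A}^\ell E\Sigma[W]E^\top\tilde{A}^{\ell\top})$, which tends to zero by exponential decay of $\tilde{A}^\ell$, so the series converges in $\lsp$. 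Hence $Y_k\to Y_\infty$ in $\lsp$, where $Y_\infty$ is the right-hand side of the distributional identity in \eqref{eq:InfXU}. Combining $X_k^\sinf \ed Y_k$ with the coupling bound $\mcl{W}_2(Y_k,Y_\infty)\leq \|Y_k - Y_\infty\|$ yields $\mu_{X_k^\sinf}\to\mu_X^\sinf$, and the limit for $\mu_U^\sinf$ then follows from $U_k^\sinf = KX_k^\sinf + F\mean[W]$ together with the Lipschitz continuity of the affine map in the Wasserstein metric. The moment formulas \eqref{eq:MomentsX} fall out by applying \eqref{eq:MomentPCE} to the limiting PCE expansion and passing to the limit in the closed-form trajectories derived in Section~\ref{sec:SolutionStochLQRInf}, with the covariance series being the standard discrete Lyapunov sum.

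For the stationarity claim, let $X\ed\mu_X^\sinf$, $U=KX+F\mean[W]$, and let $W$ be a process disturbance independent of $X$ with $W\ed\mu_W$. Substituting into \eqref{eq:Sys} gives
\[
AX + BU + EW = \tilde{A}X + \tilde{F}\mean[W] + E(W-\mean[W]).
\]
Substituting the representation \eqref{eq:InfXU} for $X$, multiplication by $\tilde{A}$ shifts the geometric series index by one, and the new summand $E(W-\mean[W])$ occupies the $\ell=0$ slot. By the i.i.d. property of the extended sequence $(W, W_0, W_1, \ldots)$, the resulting sum is equal in distribution to $\sum_{\ell=0}^{\infty}\tilde{A}^{\ell}E(W_\ell - \mean[W])$, so $AX+BU+EW\ed\mu_X^\sinf\ed X$ and $(X,U)$ is a stationary pair.

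The main obstacle is conceptual rather than computational: $X_k^\sinf$ itself is \emph{not} Cauchy in $\lsp$, because its dependence on the past disturbances shifts with $k$, yet its \emph{distribution} does converge. The i.i.d.-based reindexing is the device that reduces the desired distributional convergence to $\lsp$ convergence of the auxiliary equidistributed sequence $Y_k$; the same reindexing also drives the proof of stationarity.
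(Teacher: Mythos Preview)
Your proposal is correct and follows essentially the same route as the paper: take the limit in the closed form \eqref{eq:InfXk}, use Schur stability of $\tilde{A}$ to kill the initial-condition terms, and recognize that the disturbance sum converges in distribution after the reindexing $j\mapsto k-1-j$; stationarity is then verified by the same shift-and-relabel argument. The paper carries this out inside the PCE framework (the reindexing is implicit because the basis functions $\phi^j$, $j\ge 0$, are i.i.d.), whereas you make the i.i.d.\ permutation and the auxiliary $Y_k$ explicit and spell out why only distributional---not $\lsp$---convergence of $X_k^\sinf$ can be expected; this is a clearer justification of a step the paper takes for granted, but not a different strategy.
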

\begin{proof}
	To simplify the notation, we first consider Assumption~\ref{ass:Simplification} to hold.
	The PCE expression of $X_k^\sinf$ in \eqref{eq:InfXk} gives
	\begin{align*}
		X_\infty^\sinf &\ed \lim_{k\to\infty} \big( (I{-}\tilde{A})^{-1}(I{-}\tilde{A}^k)\tilde{F}\mean[W]{+} \tilde{A}^k \pce{x}\ini^{-2}  \big)\phi^{-2}\\
		& \hspace{45pt} {+}\lim_{k\to\infty} \tilde{A}^k\pce{x}\ini^{-1} \phi^{-1} {+} \textstyle{\sum_{j=0}^{\infty}} \tilde{A}^{j}E\pce{w}^0\phi^j \\
		&= (I{-}\tilde{A})^{-1}\tilde{F}\mean[W] \phi^{-2} + \textstyle{\sum_{j=0}^{\infty}} \tilde{A}^{j}E\pce{w}^0 \phi^j,\\
		U_\infty^\sinf &=  F \mean[W] + KX_\infty^\sinf.
	\end{align*}
	Then it is straightforward to obtain the first two moments of $X_\infty^\sinf$ as~\eqref{eq:MomentsX}. Note that $\Sigma [X_\infty^\sinf]$ is the unique positive semidefinite solution to the discrete-time Lyapunov equation $\tilde{A}\Sigma [X_\infty^\sinf]\tilde{A}^\top -\Sigma [X_\infty^\sinf] + E\Sigma[W]E^\top=0$, where $\Sigma[W] \succeq 0$ \citep{simoncini16computational}. Since $U^\sinf=KX^\sinf+F\mean[W]$, $\mean  [Z_\infty^\sinf]<\infty$ and $\Sigma [Z_\infty^\sinf]<\infty$ follow for $Z\in\{X,U\}$. Hence $(X_\infty^\sinf,U_\infty^\sinf)$ exists and lives in an $\lsp$ space, i.e., $X_k^\sinf\in\splx{n_x}$ and $U_k^\sinf\in\splx{n_u}$ for $k\in\N^\infty$.
	
	Next we prove that $(X_\infty^\sinf,U_\infty^\sinf)$ is a stationary pair. Let $W=\mean[W]\phi^{-2}+ \pce{w}^0\phi^w$ be the disturbance that is independent of $X_\infty^\sinf$ and $U_\infty^\sinf$ with $\phi^w \ed \phi^j$ for all $j\in\N^\infty$. Then we have
	\begin{align*}
		& AX_\infty^\sinf {+} BU_\infty^\sinf {+} EW = \tilde{A}X_\infty^\sinf {+} BF\mean[W] {+}  EW\\
		\ed&\tilde{A}\Big((I{-}\tilde{A})^{-1}\tilde{F}\mean[W] \phi^{-2} {+} \textstyle{\sum_{j=0}^{\infty}} \tilde{A}^{j}E\pce{w}^0\phi^j\Big) \\
		&\hspace{80pt} +BF\mean[W] {+} E(\mean[W]\phi^{-2}{+} \pce{w}^0\phi^w)\\
		=&\Big(\tilde{A}(I{-}\tilde{A})^{-1}{+}I\Big) \tilde{F}\mean[W] \phi^{-2} {+}  E\pce{w}^0\phi^w{+} \textstyle{\sum_{j=1}^{\infty}} \tilde{A}^{j}E\pce{w}^0\phi^j,\\
		\ed& (I{-}\tilde{A})^{-1}\tilde{F}\mean[W] \phi^{-2} + \textstyle{\sum_{j=0}^{\infty}} \tilde{A}^{j}E\pce{w}^0\phi^j \ed X_\infty^\sinf,
	\end{align*}
	where $\phi^{-2}=1$ and $\pce{w}^0\phi^j = W_j-\mean[W]$ for $j\in\N^\infty$. Hence, any $(X,U)$ satisfying $X\ed X_\infty^\sinf$ and $U= KX+F\mean[W]$ is a stationary pair.
	
	To drop Assumption~\ref{ass:Simplification}, we replace $\pce{x}_{\ini}^{-1}\phi^{-1}$ and $w^0\phi^j$ in the above proof with $X_{\ini}-\mean[X_{\ini}]$ and $W_j-\mean[W]$, respectively. The corresponding decomposition of $W$ reads $W=\mean[W] + \big(W-\mean[W])$. Then following along the same line, the above proof still holds without Assumption~\ref{ass:Simplification}. This way, we relax Assumption~\ref{ass:Simplification}. As we have shown that $(X_\infty^\sinf,U_\infty^\sinf)$ lives in an $\lsp$ probability space and is a stationary pair, the convergence of $(X^\sinf,U^\sinf)$ in probability measure follows.
\end{proof}

\subsection{Convergence rate}
Given any trajectory of a stochastic LTI system, the concept of corresponding stationary trajectory is introduced by \citet{schiessl23pathwise}.
\begin{defn}[Stationary trajectory]
	Given any state-input-disturbance trajectory $\{(X_k,U_k,W_k)\}_{k=0}^N$, $N\in\N^\infty$ of system \eqref{eq:Sys}, $\{(\bar{X}_k, \bar{U}_k)\}_{k=0}^N$ satisfying
	\begin{align*}
		\begin{bmatrix} X_{k+1} \\ \bar{X}_{k+1} \end{bmatrix} &= \begin{bmatrix} A & 0 \\ 0 & A \end{bmatrix} \begin{bmatrix} X_k \\ \bar{X}_k \end{bmatrix} + \begin{bmatrix} B & 0 \\ 0 & B \end{bmatrix} \begin{bmatrix} U_k \\ \bar{U}_k \end{bmatrix} + \begin{bmatrix} E \\ E \end{bmatrix}W_k,\\
		\begin{bmatrix} X_0 \\ \bar{X}_0 \end{bmatrix} &= \begin{bmatrix} X\ini \\ \bar{X}\ini \end{bmatrix},\quad (\bar{X}_{k+1},\bar{U}_{k+1})\ed (\bar{X}_k,\bar{U}_k).
	\end{align*}
	is called corresponding stationary trajectory.
	Additionally, $\{(\bar{X}_k^\sinf,\bar{U}_k^\sinf)\}_{k=0}^N$ denotes the corresponding stationary trajectory with probability measure  $(\mu_X^\sinf, \mu_U^\sinf)$, i.e. $(\bar{X}_k^\sinf,\bar{U}_k^\sinf)\ed (\mu_X^\sinf, \mu_U^\sinf)$.
\end{defn}
With the above definition, we obtain the convergence of $(\bar{X}_k^\sinf,\bar{U}_k^\sinf)$ to its corresponding stationary trajectory.
\begin{lem}[Exponential convergence of $(\bar{X}_k^\sinf,\bar{U}_k^\sinf)$] \label{lemma:Convergence}
	Let $(X^\sinf,U^\sinf)$ be the optimal state-input trajectory of OCP~\eqref{eq:StochOCPInf}, and let $(\bar{X}^\sinf,\bar{U}^\sinf)$ be the corresponding stationary trajectory with measure $(\mu_X^\sinf, \mu_U^\sinf)$. Then there exist constants $\beta>0$, $p \in [0,1)$, such that
	\[
	\|(X_k^\sinf,U_k^\sinf) -(\bar{X}_k^\sinf,\bar{U}_k^\sinf)\| \leq \beta p^k.
	\]
	That is, $(X^\sinf,U^\sinf)$ converges to $(\bar{X}^\sinf,\bar{U}^\sinf)$ exponentially in the sense of the $\mcl{L}^2$-norm. Consequently, $(X^\sinf,U^\sinf)$ converges almost surely to $(\bar{X}^\sinf,\bar{U}^\sinf)$, i.e., for any $\varepsilon>0$, we have
	\begin{equation*}
		\mathbb{P}\Big(\limsup_{k \rightarrow \infty} \{ \| (X^\sinf_k,U^\sinf_k) - (\bar{X}^\sinf_k,\bar{U}^\sinf_k) \|_{2} \geq \varepsilon \} \Big) = 0.
	\end{equation*}
\end{lem}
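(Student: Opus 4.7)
The plan is to exploit the closed-loop structure to show that the error between $(X^\sinf,U^\sinf)$ and $(\bar X^\sinf,\bar U^\sinf)$ evolves autonomously under the Schur matrix $\tilde A = A+BK$, after which both bounds follow from elementary facts about geometric decay.

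First, I would note that by Lemma~\ref{lem:Infinite} the optimal input is the pathwise feedback $U_k^\sinf = KX_k^\sinf + F\mean[W]$, and by Lemma~\ref{lem:Limit} the stationary pair $(\mu_X^\sinf,\mu_U^\sinf)$ is coupled by the same affine map $u = Kx + F\mean[W]$. Because the corresponding stationary trajectory $(\bar X_k^\sinf,\bar U_k^\sinf)$ is driven by the \emph{same} disturbance $W_k$ (cf.\ the joint dynamics in the definition), this feedback relation is preserved pathwise, i.e.\ $\bar U_k^\sinf = K\bar X_k^\sinf + F\mean[W]$ for all $k$. Substituting into the joint recursion, the $EW_k$ and $BF\mean[W]$ contributions cancel, and the state error obeys $X_{k+1}^\sinf - \bar X_{k+1}^\sinf = \tilde A(X_k^\sinf - \bar X_k^\sinf)$, which iterates to
\[
X_k^\sinf - \bar X_k^\sinf = \tilde A^k(X\ini - \bar X\ini^\sinf),\qquad U_k^\sinf - \bar U_k^\sinf = K\tilde A^k(X\ini - \bar X\ini^\sinf).
\]

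Second, since $(A,B)$ is stabilizable and $(A,Q^{1/2})$ is detectable, the LQR gain $K$ makes $\tilde A$ Schur, so there exist constants $C\geq 1$ and $\rho \in [0,1)$ with $\|\tilde A^k\|_2 \leq C\rho^k$. Setting $\beta := C\sqrt{1+\|K\|_2^2}\,\|X\ini - \bar X\ini^\sinf\|$ (which is finite because $\bar X\ini^\sinf \in \splx{n_x}$ by Lemma~\ref{lem:Limit}) and $p := \rho$, one obtains the stated $\lsp$-bound via
\[
\|(X_k^\sinf,U_k^\sinf) - (\bar X_k^\sinf,\bar U_k^\sinf)\| \leq \sqrt{1+\|K\|_2^2}\,\|\tilde A^k\|_2\,\|X\ini - \bar X\ini^\sinf\| \leq \beta p^k.
\]
For the almost-sure statement, I would apply Chebyshev's inequality to obtain $\mbb{P}(\|(X_k^\sinf,U_k^\sinf) - (\bar X_k^\sinf,\bar U_k^\sinf)\|_2 \geq \varepsilon) \leq \beta^2 p^{2k}/\varepsilon^2$ for every $\varepsilon>0$, and then invoke the Borel--Cantelli lemma, whose hypothesis is verified by the summability of the geometric series $\sum_k p^{2k}$, to conclude $\mbb{P}(\limsup_k\{\|\cdot\|_2\geq\varepsilon\})=0$.

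The one subtle point---rather than a genuine obstacle---is justifying that the feedback law $\bar U_k^\sinf = K\bar X_k^\sinf + F\mean[W]$ holds pathwise and not merely in distribution; this is precisely what the common-disturbance coupling in the definition of the corresponding stationary trajectory enforces, so once this observation is made explicit, the argument collapses to a one-line Schur-stability estimate followed by a standard Chebyshev--Borel--Cantelli step.
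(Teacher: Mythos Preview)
Your proposal is correct and follows essentially the same route as the paper: derive the autonomous error recursion $X_{k+1}^\sinf-\bar X_{k+1}^\sinf=\tilde A(X_k^\sinf-\bar X_k^\sinf)$ from the shared feedback law, bound $\|\tilde A^k\|_2$ geometrically, and conclude almost-sure convergence via a tail-probability bound plus Borel--Cantelli. The only cosmetic differences are that the paper obtains the decay of $\|\tilde A^k\|_2$ through an explicit eigenvalue decomposition $\tilde A=V\Lambda V^{-1}$ (tacitly assuming diagonalizability, which your generic Schur bound avoids) and uses Markov's inequality rather than Chebyshev's for the summability step; neither changes the substance of the argument.
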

\begin{proof}
	The input feedback policies $U_k^\sinf = K X_k^\sinf + F\mean[W]$ and $\bar{U}_k^\sinf = K \bar{X}_k^\sinf + F\mean[W]$ imply that $X_{k+1}^\sinf - \bar{X}_{k+1}^\sinf = \tilde{A} (X_k^\sinf - \bar{X}_k^\sinf)$.
	It follows that $\|(X_k^\sinf - \bar{X}_{k}^\sinf)\| \leq \|\tilde{A}^k\| \|X_0^\sinf - \bar{X}_0^\sinf\|$. Let the eigenvalue decomposition of $\tilde{A}$ be $\tilde{A}=V\Lambda V^{-1}$ with diagonal matrix $\Lambda$ and $\rho_{\tilde{A}}$ be the largest eigenvalue of $\tilde{A}$. Then we get $\|\tilde{A}^k\|\leq \|V\|_2 \|\Lambda^k\|\|V^{-1}\|_2 \leq \rho_{\tilde{A}}^k\|V\|_2 \|V^{-1}\|_2$, where $\|V\|_2$ denotes the 2-norm of the matrix $V$. Since $(A,B)$ is stabilizable and $(A,Q^{1/2})$ is detectable, all the eigenvalues of $\tilde{A}$ are inside the unit circle and thus  $0\leq \rho_{\tilde{A}}<1$. Therefore, we obtain $\|X_k^\sinf - \bar{X}_k^\sinf\| \leq \beta_x \rho_{\tilde{A}}^k$ with $\beta_x = \|V\|_2\cdot\|V^{-1}\|_2\cdot \|X_0^\sinf - \bar{X}_0^\sinf\|$. 	Moreover, the optimal input policy $U_k^\sinf =KX_k^\sinf + F \mean[W]$ suggests $\|U_k^\sinf {-} \bar{U}_k^\sinf\| \leq \|K\|_2 \beta_x \rho_{\tilde{A}}^k$.
	Finally, we obtain $\|(X_k^\sinf,U_k^\sinf) -(\bar{X}_k^\sinf,\bar{U}_k^\sinf)\| \leq \sqrt{1+ \|K\|_2^2} \beta_x \rho_{\tilde{A}}^k = \beta p^k$ with $\beta\coloneqq \sqrt{1+ \|K\|_2^2} \beta_x$ and $p\coloneqq\rho_{\tilde{A}}$.
	
	By Markov's inequality the exponential convergence implies that $\sum_{k=0}^{\infty} \mathbb{P}(\| (X^\sinf_k,U^\sinf_k) - (\bar{X}^\sinf_k,\bar{U}^\sinf_k) \|_{2} \geq \varepsilon ) \leq  \sum_{k=0}^{\infty} \|(X_k^\sinf,U_k^\sinf) -(\bar{X}_k^\sinf,\bar{U}_k^\sinf)\|/\varepsilon \leq \sum_{k=0}^{\infty} \beta p^k/\varepsilon = \beta/\big(\varepsilon (1-p)\big) < \infty$ holds for all $\varepsilon > 0$ \citep{bertsekas08introduction}. Using the Borel-Cantelli lemma almost sure convergence follows \citep{feller71introduction}.
\end{proof}
\section{Optimal Stationary Solutions} \label{sec:Stationary}
In this section, we give the unique optimal solution to the stationary optimization problem in closed form. Additionally, we provide the finite-dimensional approximation of the infinite-dimensional optimal solution for a given error bound.
\subsection{Optimal Stationary Pair}
The deterministic Stationary Optimization Problem (SOP) that corresponds to the deterministic linear--quadratic OCP \eqref{eq:DeterOCP}  is given by
\begin{equation*}
		\min_{\bar{x}\in\R^{\dimx}, \bar{u}\in\R^{\dimu}}~ \ell(\bar{x},\bar{u})\quad \text{s.t.} \quad \bar{x} =  A\bar{x}+B\bar{u} +Ec.
\end{equation*}
The optimal stationary pair is denoted by $(\bar{x}^\star,\bar{u}^\star)$. The stochastic counterpart to the SOP reads
\begin{equation} \label{eq:SOP}
		\min_{\substack{\bar{X}\in\lsp(\R^\dimx),\\ \bar{U}\in\lsp(\R^\dimu)}}~ \ell(\bar{X},\bar{U})\quad \text{s.t.} ~ \bar{X} \ed  A\bar{X}+B\bar{U}+EW 
\end{equation}
whose optimal solutions are denoted as $(\bar{X}^\star,\bar{U}^\star)$.

Consider an infinite-dimensional basis $\{\bar{\phi}^j\}_{j=0}^\infty$ which spans the $\dimx$ spatial dimensions of $\lsp(\R^\dimx)$. We have $\bar{Z} =\sum_{j\in\N^\infty} \bar{\pce{z}}^j\bar{\phi}^j$, $(Z,z)\in\{(U,u),(X,x)\}$. Similar to~\eqref{eq:StochPCE}, by replacing all random variables in~\eqref{eq:SOP} with their PCEs, the reformulated~\eqref{eq:SOP} follows
\begin{equation} \label{eq:SOPPCE}
	\begin{split}
			&\min_{\bar{\pce{u}}^j \in\R^{n_u}, \bar{\pce{x}}^j\in\R^{n_x},j\in\N^\infty}~  \sum_{j\in\N^\infty} \ell(\bar{\pce{x}}^j,\bar{\pce{u}}^j)\| \bar{\phi}^j\|^2\\
		\text{s.t.}& ,\quad \bar{\pce{x}}_+^{j} = A\bar{\pce{x}}^j + B\bar{\pce{u}}^j + E\pce{w}^j,\quad j\in \N^\infty,\\
		&\quad \sum_{j\in\N^\infty}\bar{\pce{x}}_+^j\bar{\phi}^j \ed \sum_{j\in\N^\infty}\bar{\pce{x}}^j\bar{\phi}^j.
	\end{split}
\end{equation}
SOP~\eqref{eq:SOP} differs from its deterministic counterpart as follows:
(i) The problem is not directly tractable as the decision variables $\bar{X}$, $\bar{U}$ are random variables. (ii) Though the solution to SOP~\eqref{eq:SOP} is unique in the sense of measure, \eqref{eq:SOP} admits infinitely many solutions in terms of random variables with the same measure. (iii)~The PCE reformulated problem~\eqref{eq:SOPPCE} is also difficult to handle due to the constraint on the equivalence in the Wasserstein metric. Since each solution in random variables corresponds to a solution in PCE coefficients, the PCE reformulated OCP~\eqref{eq:SOPPCE} also admits infinitely many solutions.

Therefore, the probability measure of the solutions to SOP~\eqref{eq:SOP} is of interest and is denoted by $(\bar{\mu}_X^\star, \bar{\mu}_U^\star)$. The next results show that $(\bar{\mu}_X^\star, \bar{\mu}_U^\star) \ed (\mu_X^\sinf, \mu_U^\sinf)$, which establishes the link between SOP~\eqref{eq:SOP} and infinite-horizon OCP~\eqref{eq:StochOCPInf}.
\begin{thm}[Unique optimal stationary pair] \label{theorem:SOP}
	Let $(\bar{\mu}_X^\star, \bar{\mu}_U^\star)$ be the probability measure of the solutions to \eqref{eq:SOP}. Then $(\bar{\mu}_X^\star, \bar{\mu}_U^\star)$ is uniquely determined by $(\mu_X^\sinf, \mu_U^\sinf)$, i.e. $(\bar{\mu}_X^\star, \bar{\mu}_U^\star) \ed (\mu_X^\sinf, \mu_U^\sinf)$. Moreover, the minimum cost is $	\ell(\bar{\mu}_X^\star, \bar{\mu}_U^\star) = \|W\|_{E^\top PE}^2 +  \|\mean[W]\|_{\Delta \bar{S}}^2$,
where $\Delta \bar{S} = E^\top G + G^\top E - F^\top (R+B^\top P B)F$.
\end{thm}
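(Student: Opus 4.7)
The theorem has two ingredients: that the probability measure of any minimizer of SOP~\eqref{eq:SOP} equals $(\mu_X^\sinf,\mu_U^\sinf)$ uniquely, and a closed-form minimum cost. My plan is to (i) establish feasibility of $(\mu_X^\sinf,\mu_U^\sinf)$, (ii) prove optimality via an overtaking argument, (iii) prove uniqueness of the measure via a completing-the-square / strict dissipativity argument, and (iv) compute the cost by a direct Riccati--Lyapunov calculation.

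Feasibility is immediate from Lemma~\ref{lem:Limit}, which shows that any $(X,U)$ with $X\ed\mu_X^\sinf$ and $U=KX+F\mean[W]$ is a stationary pair of~\eqref{eq:Sys}. For optimality, take any feasible stationary pair $(\bar X,\bar U)$ with stage cost $c:=\ell(\bar X,\bar U)$. Running the corresponding stationary trajectory from $X\ini=\bar X$ gives $J_N(\bar X,\bar U)=Nc$. Simultaneously, Lemma~\ref{lemma:Convergence} together with the feedback $U_k^\sinf=KX_k^\sinf+F\mean[W]$ yields $J_N(\bar X,U^\sinf)=N\,\ell(\mu_X^\sinf,\mu_U^\sinf)+O(1)$, since the deviation of $(X_k^\sinf,U_k^\sinf)$ from its stationary counterpart decays geometrically. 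Plugging these into Definition~\ref{def:OvertakingDeter}, dividing by $N$, and letting $N\to\infty$ forces $c\geq\ell(\mu_X^\sinf,\mu_U^\sinf)$.

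The main obstacle is uniqueness of the optimal measure, which I would treat via strict dissipativity. Given any optimal stationary pair $(\bar X,\bar U)$, write $\bar U=K\bar X+F\mean[W]+\Delta$ with $\Delta\in\lsp(\R^{\dimu})$. Expanding $\ell(\bar X,\bar U)$ via the stationary Riccati identity $Q+K^\top RK=P-\tilde A^\top P\tilde A$, substituting the stationarity constraint $\bar X\ed A\bar X+B\bar U+EW$, and using independence of $W$ from $(\bar X,\bar U)$, the cross terms should telescope through the Lyapunov equation~\eqref{eq:MomentsXCovariance}. What remains is the baseline value $\ell(\mu_X^\sinf,\mu_U^\sinf)$ plus an irreducible nonnegative residual $\|\Delta\|_{R+B^\top PB}^2$. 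Since $R+B^\top PB\succ 0$, optimality forces $\Delta=0$ almost surely, so $\bar U=K\bar X+F\mean[W]$; the Lyapunov equation then uniquely determines $\Sigma[\bar X]=\Sigma[\mu_X^\sinf]$, and the stationary mean relation pins down $\mean[\bar X]=(I-\tilde A)^{-1}\tilde F\mean[W]$, so $\mu_{\bar X}\ed\mu_X^\sinf$ and consequently $\mu_{\bar U}\ed\mu_U^\sinf$.

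Finally, for the cost formula I would evaluate
\[
\ell(\mu_X^\sinf,\mu_U^\sinf)=\|\mean[X]\|_Q^2+\|\mean[U]\|_R^2+\tr\bigl((Q+K^\top RK)\,\Sigma[\mu_X^\sinf]\bigr),
\]
collapse the variance contribution using $Q+K^\top RK=P-\tilde A^\top P\tilde A$, cyclicity of $\tr$, and~\eqref{eq:MomentsXCovariance} to $\tr(E^\top PE\,\Sigma[W])$, and identify the mean contribution with the optimal value of the deterministic SOP under constant forcing $c=\mean[W]$, which via~\eqref{eq:SolutionDeterLQR}--\eqref{eq:SolutionDeterCost} equals $\|E\mean[W]\|_P^2+\|\mean[W]\|_{\Delta\bar S}^2$. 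Combining the two parts and using $\|E\mean[W]\|_P^2+\tr(E^\top PE\,\Sigma[W])=\|W\|_{E^\top PE}^2$ yields the claimed formula.
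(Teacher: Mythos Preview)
Your approach is genuinely different from the paper's: the paper outsources optimality and uniqueness of $\bar\mu_X^\star$ to Theorem~5.2 of \citet{schiessl24turnpike} and then argues uniqueness of $\bar\mu_U^\star$ by contradiction via Lemma~\ref{lem:Infinite}, whereas you give a self-contained strict-dissipativity argument. Your completion of squares is correct---with storage $V(x)=\|x-\bar x\|_P^2$ and the deterministic steady state $(\bar x,\bar u)$, stationarity and independence of $W$ indeed collapse everything to $\ell(\bar X,\bar U)=\ell(\bar x,\bar u)+\tr(E^\top PE\,\Sigma[W])+\|\Delta\|_{R+B^\top PB}^2$. This already yields optimality and the cost formula, so your step~(ii) is redundant (and in fact slightly delicate, since an arbitrary stationary pair need not produce an \emph{adapted} input trajectory, which the overtaking comparison in Definition~\ref{def:OvertakingDeter} requires).

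There is, however, a genuine gap in your uniqueness step. After establishing $\Delta=0$, i.e.\ $\bar U=K\bar X+F\mean[W]$, you conclude ``the Lyapunov equation then uniquely determines $\Sigma[\bar X]$, and the stationary mean relation pins down $\mean[\bar X]$, so $\mu_{\bar X}\ed\mu_X^\sinf$.'' Matching the first two moments does \emph{not} determine the measure unless the distribution is Gaussian---and the whole point of the paper is the non-Gaussian setting. What you need instead is that, once $\bar U=K\bar X+F\mean[W]$, stationarity reads
\[
\bar X \ \ed\ \tilde A\,\bar X + \tilde F\,\mean[W] + E\bigl(W-\mean[W]\bigr),\qquad W\perp\bar X,
\]
and since $\rho(\tilde A)<1$ this distributional fixed-point equation has a \emph{unique} solution in $\lsp$: iterating $n$ times and letting $n\to\infty$ (the $\tilde A^n\bar X$ term vanishes in $\lsp$-norm) gives exactly the series in~\eqref{eq:InfXU}, hence $\mu_{\bar X}\ed\mu_X^\sinf$. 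With this fix your argument is complete and arguably cleaner than the paper's, since it avoids the external reference; the paper's route, on the other hand, sidesteps the explicit completion-of-squares computation.
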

\begin{proof}
Consider infinite-horizon OCP~\eqref{eq:StochOCPInf} and let $W$ follow the Dirac distribution, i.e., $W$ is deterministic and thus $W=\mean[W]$. Lemma~\ref{lem:Infinite} implies that the considered system~\eqref{eq:Sys} is optimally operated at steady state $(\mean[\mu_X^\sinf],\mean[\mu_U^\sinf])$. Thus, $(\mean[\mu_X^\sinf],\mean[\mu_U^\sinf])$ is an optimal solution to the deterministic counterpart of SOP SOP~\eqref{eq:SOP}. Therefore, the assumptions of Theorem~5.2 by \cite{schiessl24turnpike} are satisfied and we can show that $(\mu_X^\sinf, \mu_U^\sinf)$ is an optimal solution to SOP~\eqref{eq:SOP} with any $\lsp$ process disturbance in the sense of probability measure. Moreover, this indicates that $\bar{\mu}_X^\star$ is uniquely determined by $\mu_X^\sinf$. Then the minimum cost immediately follows as the measure $(\bar{\mu}_X^\star, \bar{\mu}_U^\star)$ is known.

Next, we show by contradiction that $\bar{\mu}_U^\star$ is also unique. Assume there exists another stationary pair $(\hat{X},\hat{U})$ minimizing SOP~\eqref{eq:SOP} with $\hat{X}\ed\mu_X^\sinf$ and $\mathcal{W}_q(\hat{U}_k,\mu_U^\sinf)\neq 0$. That is, $\ell(\hat{X},\hat{U})= \ell(\mu_X^\sinf, \mu_U^\sinf)$. Consider infinite-horizon OCP~\eqref{eq:StochOCPInf} with initial condition $X\ini = \hat{X}$. Then the trajectory $\{(\hat{X}_k,\hat{U}_k)\}_{k=0}^\infty$ with $\hat{X}_0=\hat{X}$ and $(\hat{X}_k,\hat{U}_k)\ed(\hat{X},\hat{U})$, $k\in\N^\infty$ is an overtakingly optimal solution to OCP~\eqref{eq:StochOCPInf}. As Lemma~\ref{lem:Infinite} states that $\hat{U}_k =K\hat{X}_k + F \mean[W]\ed\mu_U^\sinf$ is the unique overtakingly optimal solution to OCP~\eqref{eq:StochOCPInf}, we arrive at a contradiction.
\end{proof}

Compared to Theorem~5.2 by \citet{schiessl24turnpike}, Theorem~\ref{theorem:SOP} offers an avenue to obtain the analytical stationary solution in closed form via the solution to the corresponding infinite-horizon OCP~\eqref{eq:StochOCPInf}. Note that Theorem~\ref{theorem:SOP} applies in a quite general setting beyond Gaussianity.

\subsection{Finite-dimensional Approximation}
As $(\bar{\mu}_X^\star, \bar{\mu}_U^\star)$ is of infinite dimension, one needs to truncate the series $\textstyle{\sum_{j=0}^{\infty}} \tilde{A}^{j}E(W_j{-}\mean[W])$, which causes a corresponding truncation error.
The following lemma points towards the computation via truncated series and thus gives a numerically tractable approximation of $(\bar{\mu}_X^\star,\bar{\mu}_U^\star)$. We recall the eigenvalue decomposition $\tilde{A} = V\Lambda V^{-1}$ and that $\rho_{\tilde{A}}$ is the largest eigenvalue of $\tilde{A}$ with $\rho_{\tilde{A}}<1$.

\begin{lem}[Finite-dimensional approximation] \label{lem:FiniteApprox}
	Consider the SOP~\eqref{eq:SOP} and its optimal solutions $(\bar{X}^\star,\bar{U}^\star)$. Let the approximation of $(\bar{X}^\star,\bar{U}^\star)$ be
	\begin{align*}
		\bar{X}^{\text{trun},\star}(p+1)  &\coloneqq (I{-}\tilde{A})^{-1}\tilde{F}\mean[W]{+}\sum_{j=0}^{p-1} \tilde{A}^{j}E(W_j{-}\mean[W]),\\
		\bar{U}^{\text{trun},\star}(p+1)  &\coloneqq K\bar{X}^{trun,\star}(p+1)+F\mean[W].
	\end{align*}
    For a user-chosen error bound $\delta>0$, we define $\tilde{p}(\delta):\R^{+}\to\N$ as
		\begin{equation} \label{eq:LeastDimension}
		\tilde{p}(\delta) = \lceil \big(\ln(\delta)+c\big)/\ln(\rho_{\tilde{A}}) \rceil,
	\end{equation}
	where $\lceil\cdot\rceil$ denotes the ceiling function and $ c = \ln(1-\rho_{\tilde{A}})-\ln \big(\sqrt{1+\|K\|_2^2}\tr(\Sigma[W]E^\top E)\|V\|_2\|V^{-1}\|_2\big)$. Then for all $p \geq \tilde{p}(\delta)$, $p\in\N$, we have
		\begin{equation} \label{eq:ErrorBound}
			\mcl{W}_2\Big((\bar{X}^\star,\bar{U}^\star), \big(\bar{X}^{\text{trun},\star}(p+1), \bar{U}^{\text{trun},\star}(p+1)\big)\Big) \leq\delta.
		\end{equation}
\end{lem}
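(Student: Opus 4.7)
The plan is to upper-bound the Wasserstein-2 distance by an $\mcl L^2$-norm under a well-chosen coupling and then reduce the problem to controlling a geometric tail. By Theorem~\ref{theorem:SOP} we have $(\bar X^\star,\bar U^\star)\ed(\mu_X^\sinf,\mu_U^\sinf)$, so Lemma~\ref{lem:Limit} lets me represent $\bar X^\star$ through the explicit PCE-type series in the i.i.d.\ copies $\{W_j\}_{j\in\N}$. I will couple $(\bar X^\star,\bar U^\star)$ and $(\bar X^{\text{trun},\star}(p+1),\bar U^{\text{trun},\star}(p+1))$ by using the same realizations $\{W_j\}_{j\in\N}$ in both, so that
\[
\bar X^\star-\bar X^{\text{trun},\star}(p+1)=\sum_{j=p}^{\infty}\tilde A^{j}E\bigl(W_j-\mean[W]\bigr),
\]
and, since $\bar U^\star = K\bar X^\star+F\mean[W]$ and analogously for the truncation, $\bar U^\star-\bar U^{\text{trun},\star}(p+1)=K(\bar X^\star-\bar X^{\text{trun},\star}(p+1))$. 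By the definition of $\mcl W_2$ in Definition~\ref{def:WS} and this coupling,
\[
\mcl W_2\bigl((\bar X^\star,\bar U^\star),(\bar X^{\text{trun},\star},\bar U^{\text{trun},\star})\bigr)\le\sqrt{1+\|K\|_2^2}\,\bigl\|\bar X^\star-\bar X^{\text{trun},\star}(p+1)\bigr\|.
\]

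Next I bound the remaining $\mcl L^2$-norm. Applying the triangle inequality in $\mcl L^2$ and the identity $\|\tilde A^{j}E(W_j-\mean[W])\|^{2}=\tr(\tilde A^{j}E\Sigma[W]E^\top\tilde A^{j\top})\le \|\tilde A^{j}\|_2^{2}\,\tr(\Sigma[W]E^\top E)$ gives
\[
\bigl\|\bar X^\star-\bar X^{\text{trun},\star}(p+1)\bigr\|\le\sqrt{\tr(\Sigma[W]E^\top E)}\,\sum_{j=p}^{\infty}\|\tilde A^{j}\|_2.
\]
Exactly as in the proof of Lemma~\ref{lemma:Convergence}, the eigendecomposition $\tilde A=V\Lambda V^{-1}$ yields $\|\tilde A^{j}\|_2\le \|V\|_2\|V^{-1}\|_2\,\rho_{\tilde A}^{j}$, so summing the geometric series delivers
\[
\mcl W_2\le \sqrt{1+\|K\|_2^{2}}\,\|V\|_2\|V^{-1}\|_2\,\bigl(\tr(\Sigma[W]E^\top E)\bigr)^{\alpha}\,\frac{\rho_{\tilde A}^{\,p}}{1-\rho_{\tilde A}},
\]
for the appropriate exponent $\alpha$ (matching the statement requires tracking whether the bound is obtained by triangle inequality on the $\mcl L^2$-norm or by Pythagoras via independence of the summands).

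Finally, I will solve the inequality $\mcl W_2\le\delta$ for $p$. Taking logarithms, rearranging with the sign-flip that arises from $\ln\rho_{\tilde A}<0$, and matching terms against the definition of $c$ in the statement directly produces the threshold $\tilde p(\delta)=\lceil(\ln\delta+c)/\ln\rho_{\tilde A}\rceil$; for any $p\ge\tilde p(\delta)$ the desired inequality \eqref{eq:ErrorBound} then follows. The main obstacle, as flagged above, is reconciling the exact constants in the statement: the factor $1-\rho_{\tilde A}$ (as opposed to $\sqrt{1-\rho_{\tilde A}^{2}}$) indicates that the triangle-inequality route on the $\mcl L^2$-norm is the intended path, and the exponent on $\tr(\Sigma[W]E^\top E)$ has to be checked against this choice so that the logarithms collapse to the stated $c$. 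The rest of the argument is a direct consequence of Lemma~\ref{lem:Limit}, stabilizability/detectability (which gives $\rho_{\tilde A}<1$), and the i.i.d.\ structure of $W_j$.
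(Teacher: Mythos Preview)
Your approach is essentially identical to the paper's: couple through the same $\{W_j\}$, bound $\mcl W_2$ by the $\mcl L^2$-norm of the difference, split off the $K$-factor to get $\sqrt{1+\|K\|_2^2}$, use the eigendecomposition $\tilde A=V\Lambda V^{-1}$ to control $\|\tilde A^{j}\|_2$, and sum the resulting geometric tail $\sum_{j\ge p}\rho_{\tilde A}^{\,j}=\rho_{\tilde A}^{\,p}/(1-\rho_{\tilde A})$; then invert for $p$.

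Your caution about the exponent $\alpha$ on $\tr(\Sigma[W]E^\top E)$ is well placed: the paper's own derivation passes through $\|E(W-\mean[W])\|$, whose square equals $\tr(\Sigma[W]E^\top E)$, yet the stated constant $c$ carries the trace without a square root. In other words, your triangle-inequality route gives exactly the $(1-\rho_{\tilde A})$ factor the statement requires, and the remaining mismatch is a typo in the paper rather than a flaw in your argument; proceed with $\alpha=\tfrac12$ and note the discrepancy.
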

\begin{proof}
	As Theorem~\ref{theorem:SOP} has shown that $(\mu_X^\sinf,\mu_U^\sinf)$ is the probability measure of $(\bar{X}^\star,\bar{U}^\star)$, the above $(p+1)$-dimensional approximation immediately follows from~\eqref{eq:InfXU}, while the truncation error is
	\begin{align*}
		\Delta \bar{X}^\star(p+1) &\coloneqq \bar{X}^\star{-}\bar{X}^{\text{trun},\star} = \sum_{j=p}^{\infty} \tilde{A}^{j}E\left(W_j{-}\mean[W]\right),\\
		\Delta \bar{U}^\star(p+1) &\coloneqq \bar{U}^\star{-}\bar{U}^{\text{trun},\star} = K\sum_{j=p}^{\infty} \tilde{A}^{j}E\left(W_j{-}\mean[W]\right).
	\end{align*}
	Then from the eigenvalue decomposition $\tilde{A}=V\Lambda V^{-1}$ and the definition of the Wasserstein metric we obtain
	\begin{align*}
		&\mcl{W}_2(\bar{X}^{\text{trun},\star}(p+1),\bar{X}^\star) \leq \| \Delta \bar{X}^\star(p+1) \|\\
		= &\sqrt{\mean\Big[\big(W_j{-}\mean[W]\big)^\top E^\top\Big(\sum_{j=p}^\infty\tilde{A}^{j\top}\tilde{A}^{j}\Big) E\big(W_j{-}\mean[W]\big)\Big]}\\
		\leq & \|V\|_2 \cdot \|V^{-1}\|_2\cdot\big\| E\big(W-\mean[W]\big) \big\|  \sum_{j=p}^{\infty}\rho_{\tilde{A}}^j\\
		= &\|V\|_2 \cdot \|V^{-1}\|_2\cdot\tr\big(\Sigma[W]E^\top E\big)\cdot \rho_{\tilde{A}}^p(1-\rho_{\tilde{A}})^{-1}.
	\end{align*}
	Since $\mcl{W}_2(\bar{U}^{\text{trun},\star}(p+1),\bar{U}^\star) = \|K\|_2\mcl{W}_2(\bar{X}^{\text{trun},\star}(p+1),\bar{X}^\star)$, we have
	\begin{align*}
		&\mcl{W}_2\Big((\bar{X}^\star,\bar{U}^\star), \big(\bar{X}^{\text{trun},\star}(p+1), \bar{U}^{\text{trun},\star}(p+1)\big)\Big)\\
		\leq &  \sqrt{1+\|K\|_2^2}\cdot\|V\|_2  \|V^{-1}\|_2\tr\big(\Sigma[W]E^\top E\big) \rho_{\tilde{A}}^p(1-\rho_{\tilde{A}})^{-1}.
	\end{align*}
	Letting $ \sqrt{1+\|K\|_2^2}\|V\|_2  \|V^{-1}\|_2\tr\big(\Sigma[W]E^\top E\big) \rho_{\tilde{A}}^p(1-\rho_{\tilde{A}})^{-1} \leq \delta$, \eqref{eq:LeastDimension} immediately follows.
\end{proof}
Given an arbitrary error bound $\delta >0$ for the Wasserstein metric, Lemma~\ref{lem:FiniteApprox} determines the dimensions of the approximations $\left(\bar{X}^{\text{trun},\star}(p+1), \bar{U}^{\text{trun},\star}(p+1)\right)$ for the error bound to hold. Note that~\eqref{eq:LeastDimension} is only a sufficient condition for~\eqref{eq:ErrorBound}. That is, there may exist $p <\tilde{p}(\delta)$ such that \eqref{eq:ErrorBound} also holds. Indeed, the proof of Lemma~\ref{lem:FiniteApprox} has shown that the approximation error of a $(p+1)$-dimensional approximation satisfies 
\begin{align*}
	&\mcl{W}_2\Big((\bar{X}^\star,\bar{U}^\star), \big(\bar{X}^{\text{trun},\star}(p+1), \bar{U}^{\text{trun},\star}(p+1)\big)\Big)\\
	\leq&\sqrt{1+\|K\|_2^2}\cdot\|\Delta \bar{X}^\star(p+1)\|\\
	=&\sqrt{1+\|K\|_2^2} \cdot \big\|E\big(W{-}\mean[W]\big)\big\|_{M(p)},
\end{align*}
where $M(p)$ is the unique positive semidefinite solution to the Lyapunov equation $\tilde{A}^\top M(p)\tilde{A} - M(p) + (\tilde{A}^p)^\top \tilde{A}^{p}=0$. Comparing the Lyapunov functions of arbitrary $p\in\N$ and $p+1$, we get $M(p+1)=\tilde{A}^\top M(p)\tilde{A}$ and thus $M(p)=\tilde{A}^{p\top}M(0)\tilde{A}^p$ follows. From the eigenvalue decomposition $\tilde{A}=V\Lambda V^{-1}$, the exponential convergence follows as $\|M(p)\|_2\leq\|V\|_2^2 \cdot\|V^{-1}\|_2^2\cdot\|M_0\|_2\rho_{\tilde{A}}^p$.
Hence, to satisfy an arbitrary required error bound $\delta>0$, we can solve the above Lyapunov equation repeatedly for $p=0,1,2,...,$ up to $\bar{p}$ such that the inequality
\begin{equation} \label{eq:LeastDimensionAlternative}
	\sqrt{1+\|K\|_2^2} \cdot \big\|E\big(W{-}\mean[W]\big)\big\|_{M(\bar{p})}\leq\delta.
\end{equation}
holds. Then \eqref{eq:ErrorBound} holds for all $p\geq\bar{p}(\delta)$. This way, we obtain another approximation $\big(\bar{X}^{\text{trun},\star}(\bar{p}+1), \bar{U}^{\text{trun},\star}(\bar{p}+1)\big)$ for the error bound $\delta$, though we cannot explicitly express $\bar{p}(\delta)$ in closed form. Since $\bar{p}(\delta)$ and $\tilde{p}(\delta)$ both satisfy the inequality~\eqref{eq:LeastDimensionAlternative}, while $\bar{p}$ is the minimum integer to have \eqref{eq:LeastDimension} hold, $\bar{p}(\delta)\leq\tilde{p}(\delta)$ follows.

While $(\bar{\mu}_X^\star, \bar{\mu}_U^\star)$ and $\left(\bar{X}^{\text{trun},\star}(p+1), \bar{U}^{\text{trun},\star}(p+1)\right)$ derived in Lemma~\ref{lem:FiniteApprox} are not expressed via PCE, they can be computed this way.
To this end, consider a $(p+1)$-dimensional approximation in Lemma~\ref{lem:FiniteApprox} and a corresponding joint basis~$\Phi=\{\phi^j\}_{j=0}^{L-1}$ constructed as \eqref{eq:BasisElement} with $L=1+p(L_w-1)$, $L_w\in\N^\infty$. The PCE of $W_k$, $k\in\I_{[0,p-1]}$ in the joint basis $\Phi$ is
\[
	W_k = \sum_{j\in\{0\}\cup\mcl{I}_k}\pce{w}_k^j\phi^j~\text{with}~ \mcl{I}_k=\I_{[k(L_w-1)+1, (k+1)(L_w-1)]},
\]
since $\pce{w}_k^j=0$ for all $j\in\I_{[1,L-1]}\setminus\mcl{I}_k$. Then we rewrite $\left(\bar{X}^{\text{trun},\star}(p+1), \bar{U}^{\text{trun},\star}(p+1)\right)$ via PCE as
\begin{equation}\label{eq:ApproximationPCE}
	\bar{X}^{\text{trun},\star}  = (I{-}\tilde{A})^{-1}\tilde{F}\mean[W]{+}\sum_{k=0}^{p-1}\sum_{j\in\mcl{I}_k} \tilde{A}^{j}E\pce{w}_k^j\phi^j
\end{equation}
and $\bar{U}^{\text{trun},\star} = K\bar{X}^{trun,\star}+F\mean[W]$.
This way, one can  compute $\left(\bar{X}^{\text{trun},\star}(p+1), \bar{U}^{\text{trun},\star}(p+1)\right)$ numerically in the PCE framework.
Additionally, the first two moments of $\Delta \bar{X}^\star(p+1)$ are $\mean\left[\Delta \bar{X}^\star(p+1)\right] =0$ and $\Sigma\left[\Delta \bar{X}^\star(p+1)\right] = \textstyle{\sum_{j=p}^{\infty}} \tilde{A}^{j}E\Sigma[W] E^\top \tilde{A}^{j\top}$. 
Therefore, given an arbitrary error bound in the sense of the first two moments, the required PCE dimension of the approximation can be derived in a similar fashion as in Lemma~\ref{lem:FiniteApprox}.
\section{Numerical Example} \label{sec:Simulation}
We modify the linearized CSTR reactor from \citet{faulwasser18asymptotic}. As the original system is stable, we modify the dynamics as
\[
X_{k+1} = \begin{bmatrix} 1.24 & 0\phantom{.0} \\ 0.12 & 0.2 \end{bmatrix} X_k + \begin{bmatrix} -0.5 \\ \phantom{-}0.5 \end{bmatrix} U_k 
+ \begin{bmatrix} 1 \\ 1 \end{bmatrix}W_k,
\]
where $W_k$, $k\in\N$ are modeled as i.i.d. scalar random variables that follow a uniform distribution with support $[0,~0.6]$. The initial condition is $X\ini = [0.4,~1.5]^\top +[0.4,~1.0]^\top\theta$, where $\theta\sim\mathcal{N}(0,1)$ is a standard Gaussian random variable. Then we have $L\ini=L_w=2$ and Assumption~\ref{ass:Simplification} is thus satisfied. The weighting matrices are $Q=\text{diag}([1,1])$, $R=1$, and $Q_N=P=\big[\begin{smallmatrix} 5.31\phantom{0} & 0.177 \\ 0.177 & 1.04\phantom{0} \end{smallmatrix}\big]$, where $P$ is the stationary solution to the discrete-time algebraic Riccati equation~\eqref{eq:RiccatiP}. Note that $(A,B)$ is controllable, and $(A,Q^{1/2})$ is detectable.

\begin{figure}[t]
	\begin{subfigure}{\linewidth}
		\centering
		\includegraphics[width=0.75\linewidth,trim={65mm 28mm 40mm 36mm},clip]{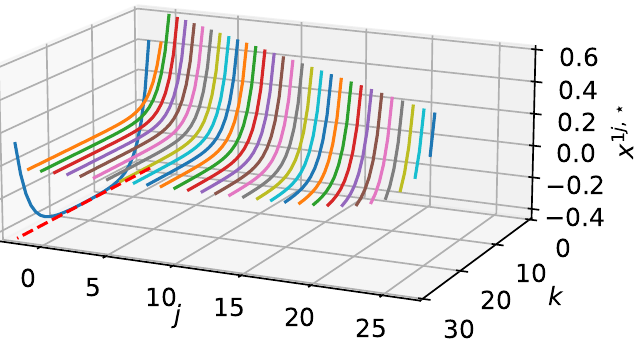}
	\end{subfigure}
	\begin{subfigure}{\linewidth}
		\centering
		\includegraphics[width=0.75\linewidth,trim={65mm 28mm 40mm 36mm},clip]{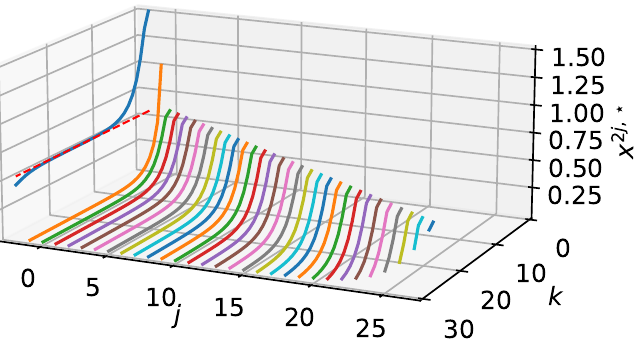}
	\end{subfigure}\\[1ex]
	\begin{subfigure}{\linewidth}
		\centering
		\includegraphics[width=0.75\linewidth,trim={65mm 28mm 40mm 36mm},clip]{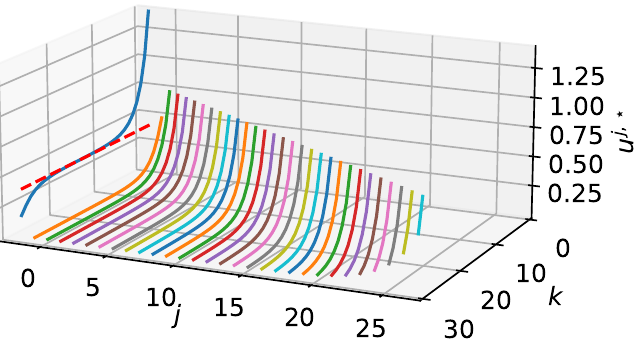}
	\end{subfigure}
	\caption{Trajectories of PCE coefficients  $\{(\pce{x}_k^{j,\star},\pce{u}_k^{j,\star})\}_{k=0}^{29}$, $j\in\I_{[-2,29]}$. Red-dashed line: Expectation of $(\mu_X^\sinf,\mu_U^\sinf)$.}
	\label{fig:CSTRPCEs}
\end{figure}

\begin{figure}[t]
	\begin{center}
		\includegraphics[width=0.5\textwidth]{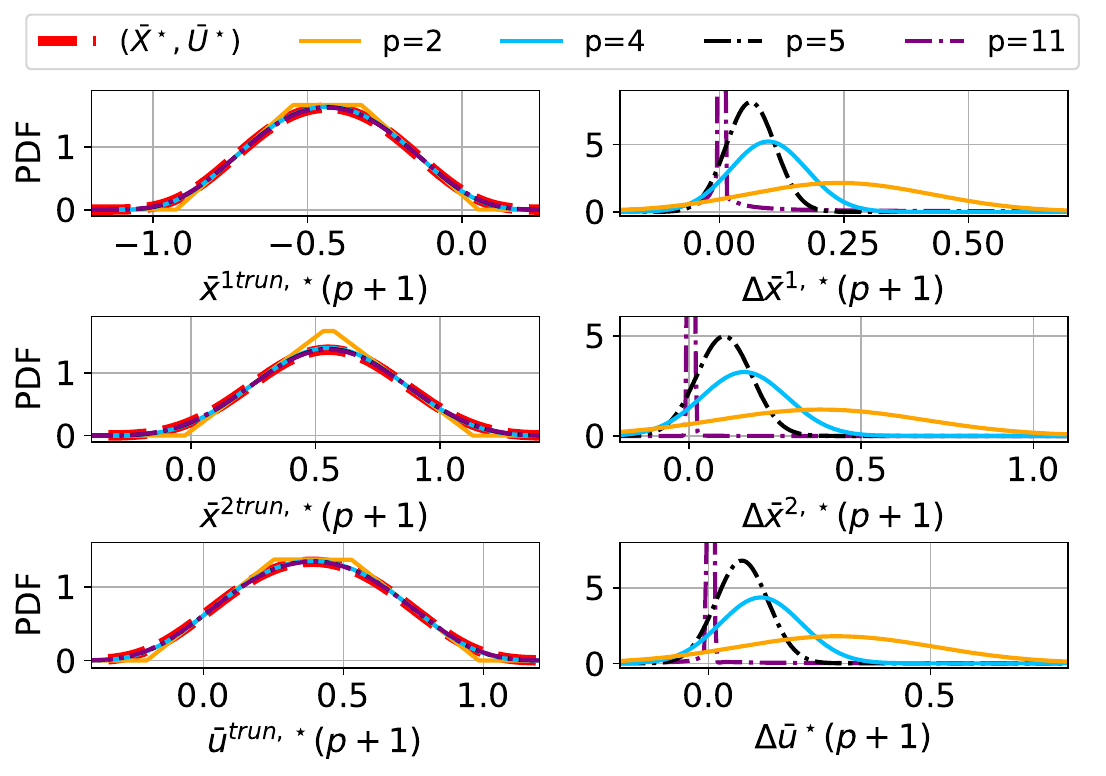}
		\caption{Approximation of $(\bar{X}^\star,\bar{U}^\star)$ via truncated PCEs for different $p$. 
			Left: PDFs of $(\bar{X}^\star,\bar{U}^\star)$ and of $\big(\bar{X}^{\text{trun},\star},\bar{U}^{\text{trun},\star}\big)$; right: PDFs of $\big(\Delta\bar{X}^{\star},\Delta\bar{U}^{\star}\big)$.\label{fig:CSTRDist}} 		
	\end{center}
\end{figure}

Analytically, the closed-form expression~\eqref{eq:InfXU} gives
\begin{align*}
	\mu_X^\sinf &\ed [ -0.437,~0.554 ]^\top + \textstyle{\sum_{k=0}^{\infty}} \tilde{A}^{k}[1,~1]^\top\big(W_k-\mean[W_k]\big),\\
	\mu_U^\sinf &\ed 0.390 + K\textstyle{\sum_{k=0}^{\infty}} \tilde{A}^{k}[1,~1]^\top\big(W_k-\mean[W_k]\big)
\end{align*}
with $ K {=} [ 1.25,~-0.0344 ]$ and $\tilde{A}{=}A{+}BK{=} \big[\begin{smallmatrix} 0.614 & 0.0172 \\ 0.746 & 0.183\phantom{0} \end{smallmatrix}\big]$. While the expectation of $(\mu_X^\sinf,\mu_U^\sinf)$ is straightforward, we calculate its covariance from Lemma~\ref{lem:Limit} as $\Sigma[\mu_X^\sinf] {=}\big[\begin{smallmatrix} 0.0502 & 0.0608 \\ 0.0608 & 0.0772\phantom{0} \end{smallmatrix}\big]$ and $\Sigma[\mu_U^\sinf] {=} 0.0736$.

We choose the optimization horizon of stochastic OCP~\eqref{eq:StochOCP} to be $N=30$ and implement the PCE reformulated OCP~\eqref{eq:StochPCE} in \texttt{julia} using \texttt{JuMP.jl} \citep{dunning17jump} and \texttt{PolyChaos.jl} \citep{muehlpfordt20uncertainty}. We directly solve the numerical optimization problem and obtain the solution in PCE coefficients $\{(\pce{x}_k^{j,\star},\pce{u}_k^{j,\star})\}_{k=0}^{29}$, $j\in\I_{[-2,29]}$. Then we compare the results with the analytical solution in PCE coefficients, i.e. \eqref{eq:SolutionPCE} given by Lemma~\ref{lem:SolutionPCEj} and the maximum difference is $5\cdot10^{-16}$. We depict $\{(\pce{x}_k^{j,\star},\pce{u}_k^{j,\star})\}_{k=0}^{29}$ for all $j\in\I_{[-2,29]}$ in Figure~\ref{fig:CSTRPCEs}, where $\pce{x}^{ij,\star}$ denotes the $j$-th PCE coefficient of the $i$-th component of $X^\star$.  We observe that the computed trajectories $\{(\pce{x}_k^{j,\star},\pce{u}_k^{j,\star})\}_{k=0}^{29}$, $j\in\I_{[-2,29]}$ are in line with the analytical results that are illustrated in Figure~\ref{fig:PCEIllustration}. Note that there is a leaving arc for the trajectory $\{(\pce{x}_k^{-2,\star},\pce{u}_k^{2,\star})\}_{k=0}^{29}$ as the system in~\eqref{eq:StochPCEExp} for $j=-2$ includes the constant $E\mean[W]$.

Then we verify the convergence of the infinite-horizon optimal trajectory shown in Lemma~\ref{lem:Limit} with the considered example. 
As the infinite-horizon OCP~\eqref{eq:StochOCPInf} is in general difficult to be solved numerically, we solve the finite-horizon OCP~\eqref{eq:StochOCP} for a long horizon $N=60$. Then we choose the state-input pair $(X_{30}^\star,U_{30}^\star)$ in the middle of the optimal trajectory to mimic $(X_\infty^\sinf,U_\infty^\sinf)$. We also need to compute the limit $\lim_{k\to\infty} \big(\mu_{X_k^{\sinf}},\mu_{U_k^{\sinf}}\big)$, i.e. $(\mu_X^\sinf,\mu_U^\sinf)$, as given in \eqref{eq:InfXU} in the PCE framework. Since $(\mu_X^\sinf,\mu_U^\sinf)$ contains infinitely many terms, we truncate them after the term containing $\tilde{A}^{99}$, as the largest entry of $|\tilde{A}^{99}|$ is $1.28\cdot 10^{-19}$. To compare the probability of $(X_{30}^\star,U_{30}^\star)$ with $(\mu_X^\sinf,\mu_U^\sinf)$,  we employ the characteristic function, which is the Fourier transform of Probability Density Functions (PDF), and its inverse to approximate the PDFs of $(X_{30}^\star,U_{30}^\star)$ and $(\mu_X^\sinf,\mu_U^\sinf)$. The maximum difference between their PDFs is only $1.91\cdot 10^{-5}$. This is consistent with Lemma~\ref{lem:Limit}.

Next we compare the optimal stationary pair $(\bar{X}^\star,\bar{U}^\star)$, which corresponds to the probability measure $(\mu_X^\sinf,\mu_U^\sinf)$, to its approximation via PCE. Here we consider the error bounds $\delta\in\{0.1,0.01\}$. Then via Lemma~\ref{lem:FiniteApprox} we get the corresponding PCE dimensions of the approximation as $\tilde{p}(0.1)=5$ and $\tilde{p}(0.01)=11$. In comparison to $\tilde{p}$, we also calculate the dimension $\bar{p}$ via \eqref{eq:LeastDimensionAlternative}. We obtain $\bar{p}(0.1)=2$ and $\bar{p}(0.01)=4$. We see that $\bar{p}(\delta)<\tilde{p}(\delta)$ holds for both $\delta=0.1$ and $\delta=0.01$. Then we compute all the PDFs of the approximations $\big(\bar{X}^{\text{trun},\star}(p+1),\bar{U}^{\text{trun},\star}(p+1)\big)$ for different $p$ as~\eqref{eq:ApproximationPCE} and the PDF of $(\bar{X}^\star,\bar{U}^\star)$, which are illustrated in the left subplots of Figure~\ref{fig:CSTRDist}. Additionally, we plot the PDFs of the truncation errors $\big(\Delta \bar{X}^{,\star}(p+1),\Delta \bar{U}^{,\star}(p+1)\big)$ for different $p$ in the right subplots of Figure~\ref{fig:CSTRDist}.
Note that the approximation accuracy increases as the PDF of $(\Delta \bar{X}^{\star},\Delta \bar{U}^{\star})$ converges to the Dirac distribution $\delta(0)$.
Therefore, we observe that a higher-dimensional PCE offers a more accurate approximation.
Moreover, the PDF of the truncation error for $p=11$ is almost a Dirac distribution, while the impulse at 0 is not fully presented in Figure~\ref{fig:CSTRDist} due to the space limit on the $y$-axis. Specifically, the truncation error for $p=11$ is less than $1.64\cdot 10^{-5}$ in the sense of the Wasserstein metric.
\section{Conclusions} \label{sec:Conclusion}
This paper has addressed stochastic LQR problems for discrete-time LTI systems for non-Gaussian disturbances with finite expectation and variance. In contrast to the established moment-based approach, the proposed PCE scheme allows uncertainty propagation, e.g. distribution propagation over the horizon. The crucial insight of our work is that all sources of uncertainties, i.e. the uncertain initial condition and the process disturbances at each time step, can be decoupled from each other and thus handled individually. This decoupling allows for a structure exploiting moving horizon basis truncation for which we have given error bounds. Moreover, we have analyzed the convergence properties of the optimal state and input trajectories for the infinite-horizon case.

We have also characterized the stochastic stationary optimization problem and given its unique solution, i.e. the optimal stationary pair, in closed analytic form. We have shown that the optimal stationary pair is indeed the limit of the optimal trajectory of the corresponding infinite-horizon LQR problem and is thus of infinite dimension. Importantly, for an arbitrary desired error bound of the approximation error, we have provided finite-dimensional approximations of the optimal stationary pair.

\section*{Acknowledgements}
The authors acknowledge funding by the Deutsche Forschungsgemeinschaft (DFG, German Research Foundation) - project number 499435839.
                         
\bibliography{arXivRef}
\end{document}